\title{Edge-disjoint Linkage in Infinite Graphs}
\author[A.~Assem]{Amena Assem}
\address{University of Waterloo and York University, ON, Canada}
\email{a36mahmo@uwaterloo.ca, amnassem@yorku.ca}
\author[R.B.~ Richter]{R. Bruce Richter}
\address{University of Waterloo, ON, Canada}
\email{brichter@uwaterloo.ca}
\date{}
\keywords{infinite graph, linkage, edge-connectivity, edge-disjoint, lifting.
\\
Reserach supported by NSERC \# 50503-10940-500}
\subjclass[2010]{05C40,05C63}
\theoremstyle{plain}
\newtheorem{theorem}{Theorem}[section]
\newtheorem{proposition}[theorem]{Proposition}
\newtheorem{corollary}[theorem]{Corollary}
\newtheorem{lemma}[theorem]{Lemma}
\newtheorem{conjecture}[theorem]{Conjecture}
\theoremstyle{definition}
\def\eop{\hfill{{\rule[0ex]{8.25pt}{8.25pt}}}}
\begin{document}

\begin{abstract}
    In 1980, Thomassen stated his \emph{weak linkage} conjecture: for an odd positive integer $k$, if a graph $G$ is $k$-edge-connected, then, for any collection of $k$ pairs of vertices $\{s_1,t_1\}$, \dots, $\{s_k,t_k\}$ in $G$, not necessarily distinct, there are pairwise edge-disjoint paths $P_1,\dots,P_k$ in $G$, with $P_i$ joining $s_i$ and $t_i$.  In 1991, Huck proved that the conclusion holds if $G$ is finite and $(k+1)$-edge-connected.  We prove that Huck's theorem holds also for all infinite graphs, extending and improving a result of Ok, Richter and Thomassen for 1-ended,  locally finite graphs.
    
A novel key tool in the proof is the \emph{linking fan proposition} proved in Section 3. To show the potential and usefulness of this proposition in other contexts, we apply it in the last section to prove a new result, similar to a result of Thomassen, on the existence of $2k$-edge-connected finite immersions in $(2k+1)$-edge-connected infinite graphs. We then use this to prove that an edge-connectivity of $2k+1$ is sufficient for infinite graphs to admit a $k$-arc-connected orientation. This is only within $1$ of the longstanding conjecture of Nash-Williams from 1960 that an edge-connectivity of $2k$ should be enough.

\end{abstract}

\maketitle

\section{Introduction}

All graphs in this paper may have parallel edges but no loops. In 1980, Thomassen introduced the \emph{weak linkage conjecture} \cite[Conjecture\ 1]{thomassen1980linkages}, which we rephrase here as follows.

\begin{conjecture}\label{cj:weakLinkage}  If $k$ is an odd positive integer and $G$ is a $k$-edge-connected graph, then, for any collection of $k$ pairs of vertices $\{s_1,t_1\}$, \dots, $\{s_k,t_k\}$ in $G$, there are $k$ pairwise edge-disjoint paths $P_1,\dots,P_k$ in $G$, with $P_i$ joining $s_i$ and $t_i$.
\end{conjecture}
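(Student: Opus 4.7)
The natural line of attack is a splitting-off (lifting) argument. I enlarge $G$ to $G^+ := G + v$ by introducing a new vertex $v$ joined by a single edge to each of $s_1,t_1,\dots,s_k,t_k$, so that $v$ has even degree $2k$. A splitting at $v$ that pairs $vs_i$ with $vt_i$ for every $i$ corresponds, after replacing each split pair by an internally edge-disjoint $s_it_i$-path in $G$, to exactly the desired edge-disjoint linkage. The whole problem thus reduces to finding a splitting at $v$ realizing the \emph{prescribed} pairing while preserving the $k$-edge-connectivity of the remaining graph.

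My plan is to proceed by induction on $k$ in steps of two (since $k$ must remain odd), the base case $k=1$ being just path-connectedness. For the inductive step I would invoke the Linking Fan Proposition of Section~3 at a carefully chosen terminal pair $\{s_i,t_i\}$ to produce a rich local substructure of internally disjoint paths emanating from a common vertex. The goal is to route one $s_it_i$-path through the fan and then show that the residual subgraph, after deleting the path's edges, remains $(k-2)$-edge-connected with respect to the remaining $k-1$ pairs (suitably rerouted through the fan). The odd parity of $k$ is essential here: every minimum $k$-edge-cut separating a prescribed pair has odd cardinality, so the demand multigraph on $\{s_1,t_1,\dots,s_k,t_k\}$ cannot be perfectly ``tangent'' to any single tight cut, and a parity-counting argument should always expose a rerouteable pair.

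For infinite $G$ I would then combine the finite argument with a compactness / inverse-limit construction. I would exhaust $E(G)$ along a well-ordered chain of finite subgraphs, introducing auxiliary boundary vertices at each stage to encode the demands that still straddle the current frontier, and apply the finite weak-linkage statement to each augmented finite instance. A König-type tree argument on the stage-by-stage partial linkages then yields an infinite edge-disjoint linkage provided each individual path closes up in finitely many steps; for paths that may be doubly infinite, one instead glues rays from both ends of each pair, using $k$-edge-connectivity to keep the end-structure compatible.

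The main obstacle — and the reason Thomassen's conjecture has resisted proof for over four decades even in the finite case — is precisely the \emph{prescribed} splitting step at $v$. Mader's splitting-off theorem and Frank's refinements produce \emph{some} admissible pairing, but not necessarily the one dictated by the demand pairs, and at edge-connectivity exactly $k$ the family of tight $k$-cuts around $v$ can cross in ways that forbid every natural swap. Any successful execution of the plan must exploit the oddness of $k$ to obtain a structural dichotomy — roughly, either the prescribed pairing is directly admissible, or some pair of obstructing tight cuts admits an uncrossing that strictly decreases a well-chosen potential — and it is this dichotomy, not the surrounding framework, that carries all the mathematical difficulty.
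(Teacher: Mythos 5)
The statement you are attempting is Conjecture~\ref{cj:weakLinkage}, which the paper does not prove and explicitly leaves open: even the finite case has been open since 1980, and the paper's actual contribution (Theorem~\ref{th:huckInfinite}) proves weak $k$-linkage only under the stronger hypothesis of $(k+1)$-edge-connectivity, by reducing to Huck's finite theorem. Your write-up is a plan rather than a proof, and you concede this yourself in the final paragraph: the ``prescribed splitting'' step at the auxiliary vertex $v$ is exactly the open problem. Mader's and Frank's splitting-off theorems produce \emph{some} admissible pair at $v$, and the lifting-graph results in Section~\ref{sec:lifting} (Lemma~\ref{lm:amena} and its relatives) describe which pairs are admissible, but none of them lets you realize a \emph{given} perfect pairing of the edges at $v$; no ``structural dichotomy'' exploiting the oddness of $k$ is exhibited, so the central step is missing, not merely deferred.

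There are further concrete defects in the surrounding framework. Your induction in steps of two does not close quantitatively: after routing one $s_it_i$-path and deleting its edges you are left with $k-1$ pairs in a graph guaranteed only to be $(k-2)$-edge-connected, while the inductive hypothesis (for the odd integer $k-2$) handles only $k-2$ pairs; and since $k-1$ is even, Thomassen's own example (the $2m$-cycle with parallel edges) shows that even the full conjecture would require $(k)$-edge-connectivity to link $k-1$ pairs, so the residual instance is strictly out of reach. The Linking Fan Proposition~\ref{linking fan proposition} cannot drive this finite inductive step either: it is a statement about pairwise edge-disjoint rays converging to an end of an infinite locally finite graph, and is vacuous for finite graphs. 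Finally, the compactness reduction you sketch for infinite $G$ is not how the paper proceeds (it uses the boundary-linked decomposition of Theorem~\ref{thm boundary linked}, the compatible lifting of Lemma~\ref{lm:compatibleLifting}, and then Huck's theorem on a finite contraction), and a na\"ive K\H{o}nig-type argument on partial edge-disjoint linkages is not justified as stated; but the decisive gap is that the finite case of the conjecture is not established.
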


In Conjecture \ref{cj:weakLinkage}, for each $i = 1, 2, \dots, k$, $s_i \neq t_i$, but otherwise there are no constraints on
the vertices $s_i$ and $t_i$. In particular, the special case in which all the $s_i$ are equal and all the $t_i$ are equal
is Menger's Theorem.

A set $\{P_1,\dots, P_k\}$ of paths as in Conjecture \ref{cj:weakLinkage} is a \emph{weak $k$-linkage} in $G$ and, if every set of $k$ pairs of vertices has a weak $k$-linkage, then $G$ is \emph{weakly $k$-linked}.  Thus, for odd positive integers $k$, Conjecture \ref{cj:weakLinkage} asserts that a $k$-edge-connected graph is weakly $k$-linked. If we denote the set of pairs by $T$, we sometimes call the linkage a \emph{$T$-linkage}.  

Thomassen gave the example of a $2k$-cycle $(s_1,\dots,s_k,t_1,\dots,t_k)$ with each edge replaced by $k/2$ parallel edges to show that, if $k$ is even, $k$-edge-connectivity is not sufficient. He conjectured that $(k+1)$-edge-connectivity suffices for even $k$. This second conjecture is obviously a consequence of Conjecture \ref{cj:weakLinkage}.

For finite graphs, after several intermediate advances leading to Okamura proving that $(4k/3)$-edge-connectivity suffices \cite{okamura}, in 1991 Huck \cite{huck1991} proved the following; the best result to date.

\begin{theorem}\label{th:huck}(Huck's Theorem) If $k$ is an odd positive integer and $G$ is a finite $(k+1)$-edge-connected graph, then $G$ is weakly $k$-linked.
\end{theorem}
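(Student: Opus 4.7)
The plan is to argue by induction on $|E(G)|$, using Mader's edge-splitting theorem as the principal tool: at any vertex $v$ with $\deg_G(v) \neq 3$ that is incident to no cut edge, there exist two edges $vx, vy$ at $v$ whose \emph{lifting} (replacement by the single edge $xy$) preserves the local edge-connectivity $\lambda_G(a,b)$ for all pairs $a, b \in V(G) \setminus \{v\}$. Applied at non-terminal vertices, this lets me discharge them while keeping the $(k+1)$-edge-connectivity on the remaining vertex set.

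Suppose $G$ is a minimum counterexample with pairs $T = \{\{s_i, t_i\}\}_{i=1}^k$; set $V_T = \bigcup_i \{s_i, t_i\}$. I would first show that $V(G) = V_T$. Given a non-terminal $v$, we have $\deg_G(v) \geq k+1 \geq 4$ (the case $k=1$ is Menger's theorem), so Mader's theorem applies, and I would iteratively lift pairs at $v$ until $v$ is isolated, then delete it. The resulting graph $G'$ has the same terminals, strictly fewer edges, and remains $(k+1)$-edge-connected on $V(G) \setminus \{v\}$, so by minimality it admits a weak $k$-linkage $\{P_i'\}$. Replacing each lifted edge $xy$ used by some $P_i'$ by the length-$2$ path $xvy$ in $G$ recovers edge-disjoint paths realizing $T$ in $G$, a contradiction, since the paired lifts at $v$ are disjoint.

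With $V(G) = V_T$, and hence $|V(G)| \leq 2k$, further terminal-aware reductions kick in; for instance, if some pair $\{s_i, t_i\}$ is joined by a direct edge, use it as $P_i$ and recurse on a smaller subinstance. After all such reductions, $G$ has minimum degree $k+1$ on at most $2k$ vertices with no direct edge between paired terminals, which is an extremely constrained configuration. A direct case analysis here, crucially exploiting that $k$ is odd so that $k+1$ is even (which defeats Thomassen's $2k$-cycle obstruction for even $k$), produces the desired linkage explicitly.

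The main obstacle will be twofold. First, Mader's lifting requires $\deg \neq 3$, while each lift lowers the degree by $2$; a non-terminal of odd initial degree will pass through $\deg_G(v) = 3$, at which point an ad hoc argument (for instance a $\theta$-graph reduction, or rerouting a previously chosen pair) is needed to complete the elimination of $v$. Second and more delicate is the endgame when every vertex is a terminal: the whole point of the $+1$ in $(k+1)$-edge-connectivity is to provide just enough slack to bypass the parity obstruction, but exploiting that slack on $\leq 2k$ vertices requires a careful finite extremal analysis. Nailing this case analysis down, while ensuring consistency with all the lifts performed earlier, is the technical heart of the proof.
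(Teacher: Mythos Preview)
The paper does not prove this statement at all: Theorem~\ref{th:huck} is simply quoted from Huck~\cite{huck1991} as a known result, and the authors are explicit that their main theorem ``does not give an independent proof of Huck's theorem, as his result is used in proving ours.'' So there is no proof in the paper to compare your proposal against.

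As for your sketch itself, the reduction via Mader lifting at non-terminal vertices is a reasonable opening move, but the two obstacles you yourself flag are not peripheral technicalities---they are essentially the entire content of Huck's theorem. The odd-degree issue at non-terminals is real: Mader's theorem blocks you at degree~$3$, and your suggested ``$\theta$-graph reduction, or rerouting a previously chosen pair'' is not an argument but a hope. More seriously, the ``endgame'' where $V(G)=V_T$ is the hard part. Saying that ``a direct case analysis here \dots\ produces the desired linkage explicitly'' on $\le 2k$ vertices is precisely what Huck's paper spends its effort establishing; the parity observation that $k+1$ is even rules out Thomassen's specific obstruction but does not by itself construct a linkage. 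Also note that deleting a direct $s_it_i$ edge and recursing is not immediately valid: removing one edge from a $(k+1)$-edge-connected graph leaves only a $k$-edge-connected graph, so the inductive hypothesis with parameter $k$ no longer applies to the remaining $k-1$ pairs without further work.

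In short, your proposal is a plausible outline of how one might \emph{begin} to attack the problem, but the parts you defer are where the actual theorem lives; it is not a proof, and the paper makes no attempt to supply one either.
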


The main point of this work is to prove that Huck's theorem \ref{th:huck} extends to infinite graphs. This improves on the following theorem by Ok, Richter, and Thomassen.

\begin{theorem}\label{ORT}\cite[Theorem 1.3]{ORT2016linkages}
Let $k$ be an odd positive integer. A $(k+2)$-edge-connected, $1$-ended, locally finite graph is weakly $k$-linked.  
\end{theorem}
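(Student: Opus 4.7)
My plan is to reduce to Huck's Theorem \ref{th:huck} by contracting the single end of $G$ to a point and iterating the construction. Using local finiteness and $1$-endedness, fix a nested exhaustion $H_1 \subseteq H_2 \subseteq \cdots$ of $G$ by finite connected subgraphs with $\bigcup_n H_n = G$, $\{s_i, t_i\}_{i=1}^k \subseteq V(H_1)$, and arranged so that $G - V(H_n)$ has exactly one infinite component $C_n$ for every $n$ (any finite components being swept into $H_{n+1}$). For each $n$, form the finite multigraph $G_n$ from $G$ by contracting $V(G)\setminus V(H_n)$ to a single vertex $\infty_n$, deleting loops but keeping parallel edges.

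Since any edge-cut of $G_n$ lifts to an edge-cut of the same size in $G$, the graph $G_n$ inherits $(k+2)$-edge-connectivity, which is more than enough for Theorem \ref{th:huck}. Apply Huck's theorem to $G_n$ on the pairs $\{s_i,t_i\}$ to obtain a weak $k$-linkage $P_1^n, \dots, P_k^n$ in $G_n$. Paths that avoid $\infty_n$ are already paths in $G$. A path that passes through $\infty_n$ decomposes into segments in $H_n$ connected by pairs of boundary edges, whose far endpoints in $V(C_n)$ must be joined by genuinely disjoint routes outside $H_n$. The collection of far endpoints, across all paths that visit $\infty_n$, constitutes a new weak linkage instance in $C_n$; passing to a much larger $H_m \supseteq H_n$, applying Huck again, and repeating, we obtain a sequence of progressively more refined partial linkages. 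A K\"onig-style diagonal extraction on the finitely branching tree whose nodes at level $n$ are linkages in $G_n$ agreeing with their ancestor at level $n-1$ inside $H_{n-1}$ yields, in the limit, edge-disjoint paths $P_1, \dots, P_k$ in $G$ realizing the required $T$-linkage.

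The main obstacle is preserving edge-connectivity of the residual instance after each routing step. Each path that enters the infinite tail consumes one pair of boundary edges, and the remaining subgraph on $V(C_n)$, together with its still-available incident edges, must carry the new linkage demand. The hypothesis of $(k+2)$-edge-connectivity is precisely the one unit of slack above Huck's threshold needed so that after contracting to a single exterior vertex and removing the already-used boundary edges at each stage, the next residual instance remains $(k+1)$-edge-connected and Huck applies again. A secondary technical point is that the pointwise limit of the approximating finite linkages really is a legitimate linkage in $G$: this requires, at each $H_n$, that only finitely many candidate continuations are possible, which is where local finiteness enters; the K\"onig argument then forces convergence to a coherent, pairwise edge-disjoint family of finite or one-way-infinite paths. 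Controlling the case in which several paths of $\mathcal{P}_n$ simultaneously traverse $\infty_n$—so that the induced sub-linkage problem has up to $k$ new pairs rather than just one—is the most delicate accounting, and is where the interplay between $1$-endedness (a single $C_n$ to route through) and the edge-connectivity margin is essential.
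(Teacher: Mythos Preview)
First, a framing remark: Theorem~\ref{ORT} is quoted in this paper as a prior result of Ok, Richter, and Thomassen; the present paper does not give a separate proof of it but instead proves the strictly stronger Theorem~\ref{th:huckInfinite}. So the comparison is really between your sketch and the lifting-based machinery of Sections~\ref{sec:lifting}--\ref{sec:proof} (specialized to the $1$-ended case, which is essentially the original ORT argument).

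Your proposal is not a proof but a plan with explicitly unresolved obstacles, and the obstacles are fatal as stated. Two concrete gaps:

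\emph{The compactness limit need not give finite paths.} If you simply apply Huck to each $G_n$ and run a K\"onig/diagonal extraction, the limit object for the $i$th pair may well be two disjoint rays, one from $s_i$ and one from $t_i$, rather than a finite $s_it_i$-path. One-endedness says these rays are equivalent, not that they share a vertex. Nothing in your argument forces the linkage paths in $G_n$ to avoid $\infty_n$ eventually, and Huck's theorem gives you no control over which linkage you get.

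\emph{The recursive residual problem does not stay well-conditioned.} Your alternative strategy is to route the $\infty_n$-visiting paths by posing a new linkage problem in $C_n$. But $C_n$, as a subgraph of $G$, has no reason to be $(k+1)$-edge-connected on its own; its connectivity depends on the boundary edges you have just consumed. The claim that the single extra unit of edge-connectivity ``is precisely the slack needed'' is asserted, not argued, and is in general false: deleting the edges of up to $k$ path-segments can drop edge-connectivity by far more than~$1$.

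The paper's (and ORT's) approach avoids both issues by a different mechanism: contract the boundary-linked infinite part to a vertex $c$, then perform \emph{lifts} at $c$ that are simultaneously $(k+1)$-connectivity-preserving and \emph{compatible} with a witnessing family of rays (Lemma~\ref{lm:compatibleLifting}). This produces a finite $(k+1)$-edge-connected graph in which Huck is applied once; each lifted edge used by the resulting linkage is then replaced by a pre-arranged finite path in $G[C]$, so no compactness or recursion is needed and the output paths are automatically finite. Your sketch lacks any analogue of this lifting step, and that is the missing idea.
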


 We present our main result in the following theorem. This theorem does not give an independent proof of Huck's theorem, as his result is used in proving ours.

\begin{theorem}\label{th:huckInfinite} (Main Result)  If $k$ is an odd positive integer and $G$ is an infinite $(k+1)$-edge-connected graph, then $G$ is weakly $k$-linked.
\end{theorem}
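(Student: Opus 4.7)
My plan is to deduce Theorem \ref{th:huckInfinite} from Huck's Theorem \ref{th:huck} by exhibiting, inside the infinite $(k+1)$-edge-connected graph $G$, a finite $(k+1)$-edge-connected multigraph $H$ that is an immersion of $G$ and that contains the $2k$ terminals $s_1,t_1,\dots,s_k,t_k$ as pairwise distinct vertices. Once such an $H$ is in hand, Huck's theorem provides edge-disjoint paths $Q_1,\dots,Q_k$ in $H$ with $Q_i$ joining $s_i$ and $t_i$, and then the standard lifting property of immersions replaces each $Q_i$ by an edge-disjoint walk in $G$; deleting closed subwalks turns these walks into the desired weak $k$-linkage $P_1,\dots,P_k$.

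To construct $H$, I would start from a finite connected subgraph $H_0\subseteq G$ containing all $2k$ terminals and iteratively repair its deficient edge-cuts. Any cut of $H_0$ of size at most $k$ corresponds to a cut of $G$ of size at least $k+1$, so at least one "missing" edge of the $H_0$-cut is realised by a walk in $G-E(H_0)$ that leaves $H_0$ on one side and returns on the other. The \emph{linking fan proposition} of Section 3 is precisely the device needed to produce many such walks simultaneously at a prescribed vertex with prescribed boundary endpoints. Splicing each such walk into $H_0$ as a single immersion edge (an edge-disjoint splitting-off across the infinite part of $G$) increases the relevant cut without adding new vertices. Iterating this augmentation over a well-chosen enumeration of deficient cuts should terminate in a finite $(k+1)$-edge-connected immersion $H$ of $G$ containing the terminals.

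The main obstacle I expect lies in making this iterative augmentation both terminate and remain an immersion of $G$: the fans required by different deficient cuts of $H_0$ compete for the same edges of $G-E(H_0)$, and a careless procedure could either fail to close up in a finite graph or destroy previously established cuts. Organising the construction so that the fans are chosen globally coherently, and combining this with a K\"onig-type compactness argument to handle arbitrary end structure and vertices of infinite degree, is what the linking fan proposition is designed for. This is exactly where the improvement over Ok-Richter-Thomassen (Theorem \ref{ORT}) must be paid for: shaving the edge-connectivity hypothesis from $k+2$ to $k+1$ and dropping both the $1$-ended and the locally finite assumptions removes the slack that had previously allowed a direct exhaustion argument, and only a tool as strong as the fan proposition seems capable of recovering that slack.
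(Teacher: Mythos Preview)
Your proposal is a plan with a self-acknowledged gap, and the gap is real: the iterative cut-augmentation you sketch has no termination mechanism, and it misapplies the linking fan proposition.

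The paper's route is organised differently. After reducing to the locally finite case via Thomassen's countable-subgraph and splitting arguments (Lemma~\ref{countable to locally finite}) --- not a K\"onig-type compactness --- one uses Theorem~\ref{thm boundary linked} to partition $V(G)$ outside the terminals into finitely many boundary-linked sets $C_1,\dots,C_r$ and contracts each $C_i$ to a vertex $c_i$. The resulting finite graph $G'$ is already $(k+1)$-edge-connected, so no augmentation is needed. One then performs a sequence of \emph{compatible lifts} at each $c_i$ (Lemma~\ref{lm:compatibleLifting}), controlled by the structure of the lifting graph (Lemmas~\ref{lm:amena} and~\ref{degree at most k+2}), until each $c_i$ has degree $0$ or $k+2$; the resulting finite $H$ is still $(k+1)$-edge-connected, and Huck's Theorem~\ref{th:huck} applies to it. Every lifted pair already comes equipped with an edge-disjoint path inside the corresponding $C_i$ (this is the content of ``compatibility''), so the $H$-linkage pulls back to $G$ except at the surviving $c_i$'s of degree $k+2$. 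Only there is Proposition~\ref{linking fan proposition} invoked, and only once per such $c_i$: it supplies a single vertex $v_i\in C_i$ and at most $k+1$ edge-disjoint paths from $v_i$ to the at most $k+1$ boundary edges actually used by the linkage, completing the routing.

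Thus the fan proposition is a \emph{local} tool, applied after the finite $(k+1)$-edge-connected $H$ has already been secured by lifting; it is not a device for globally repairing deficient cuts of a seed graph. Your plan bypasses the lifting-graph machinery, which is precisely what forces the construction to terminate in a $(k+1)$-edge-connected finite graph, and Proposition~\ref{linking fan proposition} as stated --- rays in a single end, at most $k$ of them, paths from one common vertex --- does not deliver the simultaneous, edge-disjoint, cross-cut walks between arbitrary prescribed sides that your augmentation scheme would require. Without a replacement for Lemma~\ref{lm:compatibleLifting}, the competition you flag between fans for different cuts is not merely a technical nuisance; it is the whole difficulty.
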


There are two main components of the proof. One is a refinement of a lifting technique used by Thomassen in \cite{thomassen2016orientations} and Ok, Richter and Thomassen in \cite{ORT2016linkages}, and the other is the \emph{linking fan proposition}. These are needed for the locally finite case and their details are presented in Sections \ref{sec:lifting} and \ref{sec:fan} respectively.  In Section \ref{sec:proof}, we provide an explanation of how to apply Thomassen's reduction methods \cite{thomassen2016orientations} to reduce the case of a general (possibly uncountable) infinite graph to the locally finite case.  The now-simple proof of Theorem \ref{th:huckInfinite} is also in Section \ref{sec:proof}. Finally, Section \ref{sec:orientations} applies the same methods to find highly connected immersions and orientations in infinite graphs. New progress towards the orientation conjecture of Nash-Williams is presented in Theorem \ref{orientation theorem} where we prove that $(2k+1)$-edge-connectivity implies the existence of a $k$-arc-connected orientation in infinite graphs.

\section{Lifting in locally finite graphs}\label{sec:lifting}

In this section, we present the lifting result that we need for our proof of Theorem \ref{th:huckInfinite}.  If $sx$ and $sy$ are two edges in a finite graph $G$, then the \emph{lift of $sx$ and $sy$} is the graph $G_{sx,sy}$ obtained from $G$ by deleting $sx$ and $sy$ and adding (a possibly duplicate) edge $xy$. If $x=y$, we delete the resulting loop (so in this case lifting is just deleting two parallel edges). In many articles, \emph{split} is used in place of \emph{lift}. Some of the earliest results on lifting were proved in the 1970s by Lov\'asz \cite{lovasz1976eulerian} (who used the term \emph{splitting}), particularly for Eulerian graphs, and Mader \cite{mader1978reduction} (who used the term \emph{lifting} and considered general graphs). Later, more was proved by Frank \cite{frank}, and others. 

For a finite graph $G$, a positive integer $k$, and a vertex $s$ of $G$, $G$ is \emph{$(s,k)$-edge-connected} if, for any two vertices $u$ and $v$ of $G-s$, there are $k$ pairwise edge-disjoint paths between $u$ and $v$ in $G$ (that can possibly go through $s$). It is useful later to note that if $G$ is $(s,k)$-edge-connected and $\deg(s)\geq k$, then $G$ is $k$-edge-connected. 

Ok, Richter, and Thomassen \cite{ORT2016linkages} introduced the \emph{k-lifting graph} for an $(s,k)$-edge-connected graph $G$ to be the graph $L(G,s,k)$ whose vertex-set is the set of edges incident with $s$ and where two edges $sx$ and $sy$ are adjacent if $G_{sx,sy}$ is $(s,k)$-edge-connected as $G$ is. In that case we say that the pair $sx$ and $sy$ is $k$-\emph{liftable} or simply \emph{liftable} if the connectivity is understood from the context. The \emph{bad graph} defined by Thomassen in \cite{thomassen2016orientations} and the \emph{non-admissibility graph} introduced by Jord\'an in \cite{Jordan} are in essence the same as the complement of the lifting graph. An important remark that we will use in our proof is that if a pair of edges $su$ and $sv$ is not liftable in $G$, then it is also not liftable in $G_{sx,sy}$, that is, $L(G_{sx,sy},s,k)$ is a subgraph of $L(G,s,k)$, for any neighbours $x$ and $y$ of $s$. 

Thomassen \cite{thomassen2016orientations} proved the following.

\begin{lemma}\label{lm:kDegBothEven} \cite[Theorem 2]{thomassen2016orientations}  Let $k$ be a positive even integer and let $s$ be a vertex of an $(s,k)$-edge-connected Eulerian finite graph $G$.  Then the complement of $L(G,s,k)$ is disconnected. \hfill\eop
\end{lemma}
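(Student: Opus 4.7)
The plan is to exhibit a nontrivial bipartition of the set of edges of $G$ incident with $s$, say $E_s = A \sqcup B$ with both parts nonempty, such that every mixed pair $\{a,b\}$ with $a \in A$ and $b \in B$ is $k$-liftable in $G$; this is exactly what disconnection of the complement of $L(G,s,k)$ amounts to. For a subset $X \subseteq V(G)-s$, let $E_s(X)$ denote the edges incident with $s$ whose other endpoint lies in $X$.

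First I would record the standard tight-cut characterization of liftability. For $X \subseteq V(G)-s$, lifting a pair $\{sx,sy\}$ changes $d_G(X)$ by $-2$ when $\{x,y\} \subseteq X$ and leaves it unchanged otherwise. Hence $\{sx,sy\}$ is non-$k$-liftable iff some set $X \subseteq V(G)-s$ containing both $x,y$, nonempty and not equal to $V(G)-s$, has $d_G(X) \in \{k,k+1\}$. Because $G$ is Eulerian, every cut $d_G(X)$ is even, and because $k$ is even this forces $d_G(X) = k$ exactly; call such $X$ \emph{tight}. This parity step is precisely where the two standing hypotheses are used.

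Easy reductions come next: if $\deg_G(s) \leq 1$ the claim is vacuous, and if some neighbor $x$ of $s$ lies in no tight set then any edge $sx$ is isolated in the complement of $L(G,s,k)$, giving disconnection. So assume $\deg_G(s) \geq 2$ and every neighbor of $s$ belongs to some tight set. For the main construction I would pick a maximal tight set $X^{*}$ with $E_s(X^{*}) \neq \emptyset$ and set $A := E_s(X^{*})$ and $B := E_s \setminus A$. Suppose some tight $Y$ straddled this bipartition, with $x \in Y \cap X^{*}$ and $y \in Y \setminus X^{*}$ both neighbors of $s$. Submodularity gives $d_G(X^{*} \cap Y) + d_G(X^{*} \cup Y) \leq d_G(X^{*}) + d_G(Y) = 2k$. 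If $X^{*} \cup Y \subsetneq V(G)-s$ then both $X^{*} \cap Y$ and $X^{*} \cup Y$ are tight, and $X^{*} \cup Y$ strictly contains $X^{*}$, contradicting the maximality of $X^{*}$. So necessarily $X^{*} \cup Y = V(G)-s$, which by direct cut computation forces $\deg_G(s) \leq k$.

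The main obstacle is this exceptional case $\deg_G(s) \leq k$, where maximality of $X^{*}$ alone is not enough. I would handle it by exploiting Eulerian parity a second time: since $\deg_G(s)$ is even and $d_G(V(G)-s-X^{*}) = \deg_G(s) + k - 2|E_s(X^{*})|$ is also even, the complementary set $V(G)-s-X^{*}$ is itself a candidate tight set, and a symmetric maximality argument applied to it either produces a tight set strictly containing $V(G)-s-X^{*}$ (impossible by the dual maximality) or certifies that the partition $E_s = E_s(X^{*}) \sqcup E_s(V(G)-s-X^{*})$ is itself the required bipartition. Finally one verifies that both parts are nonempty in every subcase — for instance, if $B = \emptyset$ then all neighbors of $s$ lie in $X^{*}$, yielding $\deg_G(s) \leq d_G(X^{*}) = k$ and reducing to a direct parity argument — and this bookkeeping is the most delicate step.
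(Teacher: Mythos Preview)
The paper does not supply a proof of this lemma --- it is quoted from Thomassen with no argument --- so there is nothing to compare your write-up against directly. Your overall strategy (reduce non-liftability to tight sets of cut value exactly $k$ via the Eulerian/even-$k$ parity, then uncross against a maximal tight set $X^{*}$ using submodularity) is indeed the standard one, and most of it is correct.

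The genuine gap is your handling of the case $X^{*}\cup Y=V(G)\setminus\{s\}$. Your proposed fix --- that $V(G)\setminus(\{s\}\cup X^{*})$ is ``itself a candidate tight set'' amenable to a ``symmetric maximality argument'' --- does not work as written: that set has degree $k+\deg_G(s)-2|E_s(X^{*})|$, which is not in general equal to $k$, and ``dual maximality'' is never made precise. The clean repair uses, alongside submodularity, the companion identity
\[
d_G(X^{*})+d_G(Y)\;=\;d_G(X^{*}\setminus Y)+d_G(Y\setminus X^{*})+2\,d_G\bigl(X^{*}\cap Y,\ \overline{X^{*}\cup Y}\bigr).
\]
When $X^{*}\cup Y=V(G)\setminus\{s\}$ the last term is at least $2$ (via the edge $sx$ with $x\in X^{*}\cap Y$). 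If $X^{*}\setminus Y=\emptyset$ then $Y=V(G)\setminus\{s\}$, contradicting that the tight set $Y$ is proper; otherwise both $X^{*}\setminus Y$ and $Y\setminus X^{*}$ are nonempty proper subsets of $V(G)\setminus\{s\}$ and hence have degree at least $k$, yielding the contradiction $2k\ge 2k+2$. So the exceptional case never occurs, and the entire $\deg_G(s)\le k$ digression is unnecessary. As an aside, your separate worry about $B=\emptyset$ is also unfounded: if every edge at $s$ landed in $X^{*}$ then the nonempty proper set $V(G)\setminus(\{s\}\cup X^{*})$ would have degree $k-\deg_G(s)$, forcing $\deg_G(s)\le 0$.
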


 This lemma was used by Thomassen in the same paper to show that if $G$ is an infinite $8k$-edge-connected graph, then $G$ has a $k$-arc-connected orientation.  This was the first result towards Nash-Williams' claim that $2k$-edge-connectivity suffices for the existence of a $k$-arc-connected orientation in infinite graphs (which Nash-Williams proved is sufficient for finite graphs \cite{nash1960orientations}).

 The following stronger statement on the $k$-lifting graph of a general finite graph, not necessarily Eulerian, was later proved by Ok, Richter, and Thomassen. The same result was independently proved before that by Jord\'an in \cite[Theorem 3.2]{Jordan} for the purpose of applications in connectivity augmentation and network optimization problems.

 \begin{lemma} \cite[Theorem 1.2]{ORT2016linkages}
 Let $k$ be a positive even integer and let $s$ be a vertex of an $(s,k)$-edge-connected finite graph $G$. If $\deg(s)$ is even, then the complement of $L(G,s,k)$ is disconnected. \hfill\eop
 \end{lemma}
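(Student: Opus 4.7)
The plan is to translate non-liftability at $s$ into the language of cuts and then reduce to Thomassen's Eulerian Lemma \ref{lm:kDegBothEven}. Call a nonempty proper subset $X\subseteq V(G)\setminus\{s\}$ \emph{dangerous} if $d_G(X)\le k+1$. A standard cut-counting argument shows that a pair $\{sx,sy\}$ is a non-edge of $L(G,s,k)$ iff some dangerous set contains both $x$ and $y$, so $\overline{L(G,s,k)}$ is disconnected iff $E_G(s)$ admits a nontrivial bipartition $A\sqcup B$ such that no dangerous set meets both parts. Using the identity $d_G(X)\equiv\sum_{v\in X}\deg_G(v)\pmod 2$ together with $k$ even, dangerous sets split by parity: the $k$-\emph{tight} sets (those with $d_G(X)=k$) contain an even number of odd-degree vertices of $G$, while the $(k+1)$-\emph{tight} sets (those with $d_G(X)=k+1$) contain an odd number.

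Let $O$ denote the set of odd-degree vertices of $G$; since $\deg(s)$ is even, $O\subseteq V(G)\setminus\{s\}$ and $|O|$ is even. The strategy is to construct an Eulerian super-graph $G^*=G+M$ by adding a pairing $M$ of the vertices in $O$ chosen so that every dangerous set of $G$ remains dangerous in $G^*$. Because adding an edge $uv$ raises $d_G(X)$ only for sets $X$ separating $u$ from $v$, this requirement forces each edge of $M$ to lie inside every dangerous set containing either of its endpoints. Once such an $M$ exists, every non-liftable pair of $G$ is still non-liftable in $G^*$; applying Lemma \ref{lm:kDegBothEven} to the Eulerian graph $G^*$ then produces a bipartition of $E(s)$ that simultaneously witnesses the disconnectedness of $\overline{L(G^*,s,k)}$ and of $\overline{L(G,s,k)}$.

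The main obstacle is producing $M$ with the required property. Submodularity of $d_G$ restricts how dangerous sets can cross, and the parity dichotomy above shows, by the standard argument that $d_G(X\cap Y)+d_G(X\cup Y)\le d_G(X)+d_G(Y)$ together with $d_G(\cdot)\ge k$, that two maximal $k$-tight sets cannot cross. Thus the vertices of $O$ distribute across an almost-partition of $V(G)\setminus\{s\}$ formed by the $k$-tight maximal dangerous sets, with the $(k+1)$-tight maximal sets attached in a controlled way. Within each $k$-tight maximal set $|O\cap X|$ is even and its vertices can be paired internally; each $(k+1)$-tight maximal set leaves one unpaired odd-degree vertex, and these leftovers must be matched across the partition by a Hall-type argument, with $|O|$ even ensuring that the global count works out.

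I expect the orchestration of these cross-boundary pairings — routed so as to preserve every $k$-tight constraint simultaneously — to be the main technical hurdle, since a careless leftover-pairing could create a new dangerous set by lowering $d$ on some previously safe cut, or fail to eliminate an odd residue. If this reduction cannot be pushed through uniformly, the fallback is to adapt Thomassen's inductive lifting argument for Lemma \ref{lm:kDegBothEven} directly to the non-Eulerian setting, with the parity dichotomy between $k$-tight and $(k+1)$-tight sets driving the additional cases that arise when $G$ has odd-degree vertices other than $s$.
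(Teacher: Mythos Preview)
The paper does not prove this lemma at all; it is quoted from \cite{ORT2016linkages} (and independently \cite{Jordan}) and marked with the end-of-proof symbol immediately after the statement. So there is nothing in the paper to compare your argument against, and I will simply assess the proposal on its own terms.

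What you have written is a plan, not a proof, and it contains a genuine obstruction that you have not confronted. You want to add a perfect matching $M$ on the set $O$ of odd-degree vertices so that every dangerous set of $G$ stays dangerous in $G^*=G+M$. But your own parity observation kills this for the $(k+1)$-tight sets: if $d_G(X)=k+1$ then $|O\cap X|$ is odd, and since $M$ is a perfect matching on $O$, the number of $M$-edges crossing $X$ has the same parity as $|O\cap X|$, hence is odd and in particular at least~$1$. Thus $d_{G^*}(X)\ge k+2$ for \emph{every} $(k+1)$-tight set, no matter how $M$ is chosen. Moreover, because adding edges can only raise boundary sizes, every set that is dangerous in $G^*$ was already $k$-tight in $G$; no new witnesses appear. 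Consequently any pair $\{sx,sy\}$ that is non-liftable in $G$ solely because of a $(k+1)$-tight witness becomes liftable in $G^*$, so the bipartition coming from Lemma~\ref{lm:kDegBothEven} for $G^*$ need not disconnect $\overline{L(G,s,k)}$. Your proposed reduction therefore cannot go through as stated.

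Two smaller points. First, your remark that ``a careless leftover-pairing could create a new dangerous set by lowering $d$ on some previously safe cut'' is backwards: adding edges never lowers $d$. Second, the claim that ``two maximal $k$-tight sets cannot cross'' is not what submodularity gives you; one also needs to rule out the possibility that $X\cup Y=V\setminus\{s\}$ or analyse the complementary inequality, which you have not done. The actual proofs in \cite{ORT2016linkages} and \cite{Jordan} proceed by a direct structural analysis of dangerous sets rather than by a reduction to the Eulerian case; your fallback suggestion of adapting Thomassen's inductive argument is much closer to what is needed, but as it stands you have only named it, not carried it out.
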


This latter result was used by Ok, Richter, and Thomassen in \cite{ORT2016linkages} to prove their Theorem \ref{ORT} stated above. We need a similar understanding of the $k$-lifting graph when $\deg(s)$ is odd. We have the following result for even connectivity $k$ which covers both cases of the parity of $\deg(s)$ (except for $\deg(s)=3$ which needs a special treatment as shown in \cite{Assem2022lifting}). This is the special case of \cite[Theorem~3.3]{Max2023orientation} with $A=V\setminus \{s\}$, which is stated again in the same paper, and proved after providing the necessary lifting graph background under \cite[Theorem~4.8]{Max2023orientation}. It also follows directly from \cite[Theorem~1.5 (ii)]{Assem2022lifting} for $\deg(s)> 4$ and \cite[Proposition~3.4]{ORT2016linkages} for $\deg(s)=4$.

A complete bipartite graph is \emph{balanced} if its two parts have the same size.

\begin{lemma}\label{lm:amena}  
Let $k$ be a positive integer and $G$ an $(s,k)$-edge-connected finite graph such that $\deg(s)\geq 4$. If $k$ is even, then

\begin{itemize}

\item either the complement of $L(G,s,k)$ is disconnected, or 

\item $L(G,s,k)$ is the union of an isolated vertex and a balanced complete bipartite graph.
\hfill\eop
\end{itemize} 
\end{lemma}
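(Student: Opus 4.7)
The plan is to analyze the non-liftable pairs at $s$ through the language of \emph{tight sets}. A pair $\{sx, sy\}$ is not $k$-liftable precisely when there is a set $X \subseteq V(G)\setminus\{s\}$ containing both $x$ and $y$ with $d(X) \in \{k, k+1\}$, because lifting reduces $d(X)$ by $2$ and would drop it below $k$. Since $k$ is even, the parity identity $d(X) + d(X\cup\{s\}) \equiv \deg(s) \pmod 2$, combined with $d(X\cup\{s\}) \geq k$ whenever $V\setminus(X\cup\{s\}) \neq \emptyset$, cleanly separates the analysis according to the parity of $\deg(s)$.

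First I would invoke the classical uncrossing lemma from the splitting-off literature: by submodularity of $d(\cdot)$, if $X_1$ and $X_2$ are tight sets with $X_1\cap X_2 \neq \emptyset$ and $X_1\cup X_2 \subsetneq V(G)\setminus\{s\}$, then either $X_1\cap X_2$ and $X_1\cup X_2$ are both tight, or $X_1\setminus X_2$ and $X_2\setminus X_1$ are. This is the standard Lov\'asz--Mader tool. Together with Mader's admissibility theorem, it already yields the first bullet whenever $\deg(s)$ is even, recovering Lemma \ref{lm:kDegBothEven} and the extension of Ok, Richter, and Thomassen.

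The new content is the case $\deg(s)$ odd. Here uncrossing produces tight sets of both sizes $k$ and $k+1$ that interact in a subtle parity-controlled way, and the complement of $L(G,s,k)$ can genuinely remain connected. In that situation I would try to show that the minimal tight sets partition the edges incident to $s$ into two sides $A, B$ of equal cardinality, together with one ``free'' edge $e^*$ (whose existence is forced by the odd parity of $\deg(s)$), and that the only non-liftable pairs are those with one end in $A$ and one in $B$. This forces $L(G,s,k)$ to be the disjoint union of the isolated vertex $e^*$ with the balanced complete bipartite graph on $A\cup B$.

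The main obstacle, and where I expect the real work to lie, is the detailed parity and uncrossing case analysis needed to force precisely this ``isolated vertex plus balanced $K_{n,n}$'' shape: ruling out intermediate configurations in the complement such as a long induced cycle or path, unbalanced sides, or more than one isolated vertex, and checking that the two sides have equal cardinality by counting how tight cuts interact with $s$. As the excerpt already notes, this is exactly the technical content of \cite[Theorem~1.5(ii)]{Assem2022lifting} for $\deg(s) > 4$ and \cite[Proposition~3.4]{ORT2016linkages} for $\deg(s) = 4$, so in practice I would cite those results rather than redo the case analysis in full.
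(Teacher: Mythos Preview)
The paper does not prove Lemma~\ref{lm:amena}; it states it with an end-of-proof box and cites it as a direct consequence of \cite[Theorem~3.3 / Theorem~4.8]{Max2023orientation}, \cite[Theorem~1.5(ii)]{Assem2022lifting}, and \cite[Proposition~3.4]{ORT2016linkages}. Your final conclusion---to cite exactly those results rather than reproduce the case analysis---is therefore precisely what the paper does, so in that sense your proposal is correct and matches the paper.

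That said, the sketch you give before arriving at that conclusion contains a genuine confusion about the target structure. You write that ``the only non-liftable pairs are those with one end in $A$ and one in $B$,'' and that this forces $L(G,s,k)$ to be an isolated vertex plus a balanced complete bipartite graph on $A\cup B$. This is inverted. In $L(G,s,k)$ the \emph{edges} are the liftable pairs; if $L(G,s,k)$ is $\{e^*\}\cup K_{|A|,|B|}$ with parts $A$ and $B$, then the liftable pairs are exactly the $A$--$B$ pairs, and the non-liftable pairs are those inside $A$, those inside $B$, and every pair involving $e^*$. Correspondingly, the tight (dangerous) sets witnessing non-liftability contain the ends of $A\cup\{e^*\}$ and of $B\cup\{e^*\}$ respectively---this is how the paper uses the structure immediately afterward in Lemma~\ref{degree at most k+2}. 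Your description would instead produce two cliques joined at a universal vertex, which is the complement of the desired picture. If you ever do expand the sketch into a proof, this reversal would need to be fixed before the uncrossing/parity argument can land on the right configuration.
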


Note that in the latter case the degree of $s$ is odd and the union of the isolated vertex with each side of the bipartite graph gives two independent sets (containing no adjacencies) in $L(G,s,k)$, of size $(\deg(s)+1)/2$ each, intersecting in the isolated vertex. We will also need the following lemma which is implied by point (6) in \cite[Lemma 3.4]{Assem2022lifting}. We include a proof here for completeness.

For this proof it is important to note that if the end-vertices $x$ and $y$ of two edges incident with $s$ are contained in a set $A$ such that $|\delta(A)|\leq k+1$ (where $\delta(A)$ denotes the \emph{boundary}, namely the set of edges with exactly one end-vertex in $A$), $s\notin A$, and $\overline{A\cup \{s\}} \neq \emptyset$, then lifting $sx$ and $sy$ results in a cut of size at most $k-1$ in $G$ which separates two non-$s$ vertices, meaning this pair of edges is not $k$-liftable. Conversely, in \cite[Theorem 1.1]{ORT2016linkages}, Ok, Richter, and Thomassen showed, under certian conditions which happen to be satisfied in Lemma \ref{lm:amena} above, that for every independent set in $L(G,s,k)$ (a set of pairwise non-liftable edges), such a set $A$, called a \emph{dangerous} set, exists in $G$ and contains all the non-$s$ end-vertices of the edges from the independent set. The conditions that need to be satisfied are that $\deg(s)\neq 3$, and $s$ is not incident with a cut-edge, which is the case above because $k\geq 2$ (as it is even) and because $G$ is $(s,k)$-edge-connected. We will show that these conditions are also met in the lemma below. Note that the structure of an isolated vertex plus a balanced complete bipartite graph for $L(G,s,k)$ may occur regardless of the parity of $k$ \cite[Figure 6]{Assem2022lifting}.

To prove the next lemma, we will make use of the following equation for two intersecting sets $A_1$ and $A_2$, which can be verified by simple counting, where $\delta(A_1:A_2)$ denotes the set of edges between $A_1$ and $A_2$.

\begin{equation} \label{intersection of two cuts}
\begin{split}
2 \bigg[ &\big|\delta(A_1)\big| +\big |\delta(A_2)\big|- \Big (\big|\delta(A_1 \cap A_2: \overline{A_1 \cup A_2})\big|+\big|\delta(A_2\setminus A_1: A_1\setminus A_2)\big|\Big) \bigg] 
\\
&= 
\big|\delta(A_1\cap A_2)\big|+\big|\delta(A_2\setminus A_1)\big|+ \big|\delta(A_1\setminus A_2)\big|+\big|\delta(\overline{A_1\cup A_2})\big|.
\end{split}
\end{equation}

\begin{lemma}\cite[Lemma 3.4]{Assem2022lifting} \label{degree at most k+2}
Let $k\geq 2$ be an integer and $G$ an $(s,k)$-edge-connected finite graph. If $L(G,s,k)$ is the union of an isolated vertex and a balanced complete bipartite graph, then $\deg(s)\leq k+2$.
\end{lemma}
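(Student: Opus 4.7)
My plan is to apply \cite[Theorem~1.1]{ORT2016linkages} (the dangerous-set converse alluded to in the paragraph preceding the lemma) to each of the two maximal independent sets in $L(G,s,k)$, and then combine the resulting dangerous sets via submodularity together with one simple observation that forces every neighbor of $s$ to lie in their union.

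Write $L(G,s,k)$ as the disjoint union of an isolated vertex $e_0=sv_0$ and a balanced complete bipartite graph with sides $L_1,L_2$ of size $m$, so that $\deg(s)=2m+1$. The two maximal independent sets of $L(G,s,k)$ are $I_1=\{e_0\}\cup L_1$ and $I_2=\{e_0\}\cup L_2$, each of size $m+1$, and they satisfy $I_1\cup I_2=\delta(s)$. If $m\le 1$, then $\deg(s)\le 3\le k+2$ and there is nothing to prove, so I may assume $m\ge 2$ and $\deg(s)\ge 5$. This ensures the hypotheses of \cite[Theorem~1.1]{ORT2016linkages}: $\deg(s)\ne 3$ is immediate, and $(s,k)$-edge-connectivity with $k\ge 2$ forbids a cut-edge incident with $s$. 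Applying that theorem to $I_1$ and $I_2$ gives dangerous sets $A_1,A_2\subseteq V\setminus\{s\}$ with $\overline{A_i\cup\{s\}}\ne\emptyset$, $|\delta(A_i)|\le k+1$, and $A_i$ containing every non-$s$ end-vertex of the edges in $I_i$; in particular, $v_0\in A_1\cap A_2$.

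The key observation is that, because $I_1\cup I_2=\delta(s)$, every edge incident with $s$ has its other end-vertex in $A_1\cup A_2$. Thus $s$ has no edges into $\overline{A_1\cup A_2\cup\{s\}}$, so all $\deg(s)$ edges incident with $s$ contribute to the cut $\delta(A_1\cup A_2)$, giving the lower bound $|\delta(A_1\cup A_2)|\ge\deg(s)$.

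For the matching upper bound, submodularity of the cut function (derivable by the same partition bookkeeping as equation~\eqref{intersection of two cuts}) gives
\[
|\delta(A_1\cap A_2)| + |\delta(A_1\cup A_2)| \;\le\; |\delta(A_1)| + |\delta(A_2)| \;\le\; 2(k+1),
\]
while $(s,k)$-edge-connectivity yields $|\delta(A_1\cap A_2)|\ge k$, since $v_0\in A_1\cap A_2$ and its complement contains both $s$ and any vertex of the nonempty set $\overline{A_1\cup\{s\}}$. Combining these gives $|\delta(A_1\cup A_2)|\le 2(k+1)-k=k+2$, and with the lower bound from the key observation, $\deg(s)\le k+2$ as desired. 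The only delicate point is verifying the preconditions of \cite[Theorem~1.1]{ORT2016linkages}, which is why the very small-degree cases are disposed of first; once dangerous sets are in hand, the rest is a clean consequence of the structural identity $I_1\cup I_2=\delta(s)$ coming from the prescribed shape of the lifting graph.
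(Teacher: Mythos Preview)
Your proof is correct and considerably shorter than the paper's. Both arguments start identically, applying \cite[Theorem~1.1]{ORT2016linkages} to the two maximal independent sets $I_1,I_2$ to obtain dangerous sets $A_1,A_2$, but they then diverge.

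The paper works with the full crossing-cuts identity~\eqref{intersection of two cuts} in $G-s$ and proceeds through several intermediate steps: it first shows $\overline{A_1\cup A_2\cup\{s\}}=\emptyset$ by contradiction, then proves $|\delta_G(A_i)|=k+1$ exactly (not just $\le k+1$), and finally squeezes out $\deg(s)\le k+2$ from a chain of equalities about $\delta(A_1\cap A_2:A_i\setminus A_{3-i})$. Your argument replaces all of this with two lines: the submodularity inequality $|\delta(A_1\cap A_2)|+|\delta(A_1\cup A_2)|\le|\delta(A_1)|+|\delta(A_2)|\le 2(k+1)$, together with the structural observation (which the paper never isolates) that $I_1\cup I_2=\delta(s)$ forces every neighbour of $s$ into $A_1\cup A_2$, whence $|\delta(A_1\cup A_2)|\ge\deg(s)$. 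Combined with $|\delta(A_1\cap A_2)|\ge k$ from $(s,k)$-edge-connectivity, the bound follows immediately. The paper's longer route does extract more structural information (for instance that $V=A_1\cup A_2\cup\{s\}$ and that both dangerous sets are tight of size $k+1$), but none of that is needed for the bare degree bound, so your direct use of submodularity is the cleaner proof of the stated lemma.
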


\begin{proof}
Let $I_1$ and $I_2$ be the two independent sets in $L(G,s,k)$, each of size $(\deg(s)+1)/2$ and consisting of the union of the isolated vertex with one side of the bipartite graph. Then, $|I_1\cap I_2|=1$. Given the structure of $L(G,s,k)$, $\deg(s)$ is at least $3$, the isolated vertex plus one vertex on each side. If $\deg(s)=3$, then $\deg(s)\leq k+2$ as $k\geq 2$. So we may assume that $\deg(s)\geq 4$. Then by \cite[Theorem 1.1]{ORT2016linkages}, there are two sets $A_1$ and $A_2$ such that $|\delta_G(A_1)|, |\delta_G(A_2)|\leq k+1$, $s\notin A_1\cup A_2$, and for $i=1,2$, $\overline{A_i\cup \{s\}} \neq \emptyset$ and $A_i$ contains the non-$s$ end-vertices of the edges in $I_i$.

The only neighbour of $s$ contained in $A_1\cap A_2$ is the non-$s$ end-vertex of the edge $e$ of $G$ that is the isolated vertex in $L(G,s,k)$. To see this, note that every edge in $I_1\setminus \{e\}$ is liftable with every edge in $I_2\setminus \{e\}$ as $L(G,s,k) - \{e\}$ is complete bipartite. Thus an edge from $I_1\setminus \{e\}$ cannot have its non-$s$ end-vertex in $A_2$ as this would imply it is not liftable with the edges of $I_2\setminus \{e\}$ (since $|\delta_G(A_2)|\leq k+1$). Similarly, an edge from $I_2\setminus \{e\}$ cannot have its non-$s$ end-vertex in $A_1$.

 We will show that there are no vertices outside $A_1\cup A_2$ in $G$ except $s$. For this, we will apply Equation \ref{intersection of two cuts} in $G-s$. Suppose for a contradiction that $\overline{A_1\cup A_2\cup \{s\}}\neq \emptyset$. Then since the edges of $I_1\setminus \{e\}$ are liftable with the edges of $I_2\setminus \{e\}$, we must have $|\delta_G(A_1\cup A_2)|\geq k+2$. But $A_1\cup A_2$ contains all the neighbours of $s$, therefore $|\delta_{G-s}(\overline{A_1\cup A_2})| \geq ((k+2)-\deg(s))$. Thus the right hand side of Equation \ref{intersection of two cuts} is at least $(k-1)+\Big(k-\Big(\frac{\deg(s)-1}{2}\Big)\Big)+\Big(k-\Big(\frac{\deg(s)-1}{2}\Big)\Big)+((k+2)-\deg(s)).$

  This gives a lower bound of $4k-2\deg(s)+2$. On the other hand, the left hand side has the upper bound of $2[(k+1-(\frac{\deg(s)+1}{2}))+(k+1-(\frac{\deg(s)+1}{2}))]= 4k+4-2(\deg(s)+1)=4k-2\deg(s)+2$. Thus, both sides are equal to $4k-2\deg(s)+2$, and the individual upper and lower bounds on each term hold with equality. In particular $|\delta_{G-s}(\overline{A_1\cup A_2})| = ((k+2)-\deg(s))$. The set $\overline{A_1\cup A_2}$ does not contain any neighbours of $s$, therefore, $|\delta_{G}(\overline{A_1\cup A_2})| = |\delta_{G-s}(\overline{A_1\cup A_2})| \newline  = ((k+2)-\deg(s)) <k$, a contradiction since $G$ is $(s,k)$-edge-connected.

  Now, because $\overline{A_1\cup A_2\cup \{s\}}= \emptyset$, the lower bound on the right hand side of Equation \ref{intersection of two cuts} is $(k-1)+(k-(\frac{\deg(s)-1}{2}))+(k-(\frac{\deg(s)-1}{2}))=3k-\deg(s)$. The upper bound on the left hand side of Equation \ref{intersection of two cuts} is $4k-2\deg(s)+2=(3k-\deg(s))+(k-\deg(s)+2)$. This means that, \newline $(|\delta_{G-s}(A_1\cap A_2:\overline{A_1\cup A_2})|+|\delta_{G-s}(A_2\setminus A_1: A_1\setminus A_2)|) \leq (k-\deg(s)+2)/2$.

  We know that $|\delta_{G-s}(A_1\cap A_2:\overline{A_1\cup A_2})|=0$ as $\overline{A_1\cup A_2\cup \{s\}}=\emptyset$. Therefore we have, $|\delta_{G-s}(A_2\setminus A_1: A_1\setminus A_2)| \leq (k-\deg(s)+2)/2$, and this same upper bound also holds in $G$. Since $G$ is $(s,k)$-edge-connected and, for $i=1,2$, $|\delta_G(A_i)|\leq k+1$, then $|\delta_G(A_i)|$ is either $k$ or $k+1$.

  We now show that for $i=1,2$, $|\delta_G(A_i)|=k+1$. If, say, $|\delta_G(A_1)|=k$, then,\newline
  $k=|\delta_G(A_1\setminus A_2 : A_2\setminus A_1)|+|\delta_G(A_1\cap A_2: A_2\setminus A_1)| + |\delta_G(\{s\}:A_1\cap A_2)|+|\delta_G(\{s\}:A_1\setminus A_2)|$. It follows that $|\delta_G(A_2\setminus A_1)|= k-1$, since $s$ has exactly one neighbour in $A_1\cap A_2$ and because $|\delta_G(\{s\}:A_2\setminus A_1)|=|\delta_G(\{s\}:A_1\setminus A_2)|$ (as $A_1$ and $A_2$ correspond to maximal independent sets of the same size in $L(G,s,k)$), a contradiction to the $(s,k)$-edge-connectivity of $G$. The same argument holds for $A_2$.

The equality $|\delta_G(A_1)|=|\delta_G(A_2)|=k+1$ implies $|\delta_{G-s}(A_1)|=|\delta_{G-s}(A_2)|=k+1-(\frac{\deg(s)+1}{2})$, so $|\delta_{G-s}(A_1\setminus A_2: A_2\setminus A_1)|+|\delta_{G-s}(A_1\cap A_2: A_2\setminus A_1)| = |\delta_{G-s}(A_2\setminus A_1: A_1\setminus A_2)|+ \newline |\delta_{G-s}(A_1\cap A_2: A_1\setminus A_2)|= k+1-(\frac{\deg(s)+1}{2})$. So, $|\delta_{G-s}(A_1\cap A_2, A_2\setminus A_1)|=|\delta_{G-s}(A_1\cap A_2, A_1\setminus A_2)|$.

  Then also in $G$, $|\delta_{G}(A_1\cap A_2, A_2\setminus A_1)|=|\delta_{G}(A_1\cap A_2, A_1\setminus A_2)|$. Now since $|\delta_G(A_1\cap A_2)|\geq k$ and $s$ has exactly one neighbour in $A_1\cap A_2$, both $|\delta_G(A_1\cap A_2: A_2\setminus A_1)|$ and $|\delta_G(A_1\cap A_2: A_1\setminus A_2)|$ have to be at least $(k-1)/2$. This lower bound, and the fact that $|\delta(A_1)|= |\delta(A_2)|=k+1$ and each contain $(\deg(s)+1)/2$ neighbours of $s$, imply that $k + 1 \geq \frac{(deg(s) + 1)}{2} + \frac{(k-1)}{2}$, so $\deg(s)\leq k + 2$.\end{proof}

We now present the definitions for boundary-linkedness and a compatible sequence of lifts which were used by Thomassen in \cite{thomassen2016orientations} and Ok, Richter and Thomassen in \cite{ORT2016linkages}. For a set of vertices $C$ (finite or infinite) in a graph $G$, we denote by $G[C]$ the subgraph of $G$ induced by $C$. We write $\delta(C)$ to denote the \emph{boundary} of $C$, which is the set of edges with exactly one end-vertex in $C$. Recall that a \emph{ray} is a one-way infinite path, and an infinite graph $G$ is \emph{locally finite} if the degree of every vertex in it is finite. An \emph{end} of an infinite graph $G$ is an equivalence class of rays, where two rays of $G$ are \emph{equivalent} if there are infinitely many vertex-disjoint paths between them in $G$. An infinite set of vertices $C$ in a graph $G$ is \emph{boundary-linked} (See also \cite[Section 2]{thomassen2016orientations}) if the union of the induced subgraph $G[C]$ together with the boundary edges $\delta(C)$ contains a \emph{witnessing} set $\mathcal R$ of pairwise edge-disjoint rays such that:

\begin{itemize}

\item  the set consisting of the first edge of each ray in $\mathcal R$ is $\delta(C)$, and 

\item the rays of $\mathcal R$ are in one end of $G[C]$.

\end{itemize}

Let $k$ be a positive integer and $G$ a $k$-edge-connected, locally finite graph. Suppose that $C$ is a boundary-linked set of vertices in $G$ with witnessing set of rays $\mathcal{R}$, such that $G-C$ is finite, then by local-finiteness $\delta(C)$ is also finite, and so is $\mathcal{R}$. For every $e\in \delta(C)$, let $R_e$ be the ray in $\mathcal{R}$ containing $e$. For each $e\in \delta(C)$, if $v$ is the end-vertex of $e$ not in $C$, denote the end-vertex in $C$ by $v'$. Note that the boundary-linkedness of $C$ implies that these end-vertices are in one component of $G[C]$. From this it follows, since $G$ is connected and $\delta(C)$ is the only connection between $G-C$ and $G[C]$, that $G[C]$ also is connected. Let $G'$ be the finite graph obtained from $G$ by contracting $C$ to a vertex $c$. We needed $G[C]$ to be connected to be able to do this contraction. Then $G'$ is $k$-edge-connected as $G$ is.  

A sequence of lifts $G^0=G'$, $G^1:=G^{0}_{cx_1,cy_1}$, $G^2:=G^{1}_{cx_2,cy_2}$, \dots, $G^n:=G^{n-1}_{cx_n,cy_n}$ is \emph{$(\mathcal R,k)$-compatible} in $G'$ if for each $i\geq 1$:

\begin{itemize}[topsep=-3pt,itemsep=-3pt]

\item $G^i$ is $(c,k)$-edge-connected (that is, $cx_i$ and $cy_i$ are adjacent in $L(G^{i-1},c,k)$); and 

\item there is a path $P_i$ in $G[C]$ between $x'_i$ and $y'_i$ that is edge-disjoint from each $P_j$ for $j<i$ and from each $R_e$, for every $e\in \delta(C)\setminus \{cx_1,cy_1,\dots,cx_i,cy_i\}$.

\end{itemize}

\vskip 4pt
The \emph{$\mathcal R$-graph} $\mathcal E_{\mathcal R}$ has the edges of $\delta(C)$ as vertices (like $L(G',c,k)$). Two edges $e$ and $f$ of $\delta(C)$ are adjacent in $\mathcal E_{\mathcal R}$ if there are infinitely many vertex-disjoint paths in $G[C]$ having an end-vertex in each of $R_e$ and $R_f$ and otherwise edge-disjoint from all the rays in $\mathcal R$.  Because the finitely many rays in $\mathcal R$ are all in one end, $\mathcal E_\mathcal R$ is connected. The second condition in the above definition of compatibility relates to the adjacency of $cx_i$ and $cy_i$ in $\mathcal E_{\mathcal R}$, with a few more details to be clarified below in the proof of Lemma \ref{lm:compatibleLifting}.

 The following lemma is the main point of this section and a principal part of the proof of Theorem \ref{th:huckInfinite}. We will show later that the technical conditions of this lemma can be satisfied. Note that there could be edges between two of the sets, $C_i$ and $C_j$ for $i\neq j$, in the statement below.

 Note also that, given a finite collection $C_1,C_2,\dots,C_r$ of pairwise disjoint boundary-linked sets, if the graph $G'$ obtained by contracting each $C_i$ to a vertex $c_i$ is finite, then $G'$ can also be regarded as obtained by contracting only one of the sets $C_i$, in the infinite graph $X_i$ obtained by contracting every set in the collection except $C_i$ (note that $X_i-C_i$ is finite). With this view, we can talk about a compatible sequence of lifts in $G'$ according to the definition given above.

\begin{lemma}\label{lm:compatibleLifting} 
Let $k$ and $r$ be positive integers, with $k$ even, and let $G$ be a $k$-edge-connected locally finite graph, and $C_1,C_2,\dots,C_r$ pairwise disjoint sets of vertices such that $G-(\bigcup_{i=1}^rC_i)$ is finite. Suppose also that each $C_i$, $1\leq i \leq r$, is a boundary-linked set with finite boundary and witnessing set $\mathcal R_i$ of rays. Let $H_0$ be the finite graph obtained from $G$ by contracting $C_i$ for every $i\in \{1,2,\dots,r\}$ to a vertex $c_i$. Then, for each $i\in \{1,2,\dots,r\}$, there is a graph $H_i$ that is obtained from $H_{i-1}$ by executing a sequence of lifts at $c_i$ that is $(\mathcal R_i,k)$-compatible in $H_{i-1}$ such that:

\begin{itemize}

\item[(1)] \label{it:evenCompatible} in the case $\deg(c_i)$ is even, the sequence is of length $\deg(c_i)/2$, and $c_i$, which now has degree $0$, is deleted after executing the sequence,

\item[(2)] \label{it:oddCompatible} in the case $\deg(c_i)$ is odd, the sequence is of length $\frac{(\deg(c_i)-(k+1))}{2}$. 

\end{itemize}

Moreover, the graph $H_r$ obtained at the end is $k$-edge-connected and the vertices $c_i$ with odd degree in $H_0$ have degree $k+1$ in $H_r$.

\end{lemma}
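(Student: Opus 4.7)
I would prove the lemma by outer induction on $i \in \{1,\dots,r\}$, building $H_i$ from $H_{i-1}$ and maintaining as invariants that $H_{i-1}$ is $k$-edge-connected and that each previously processed $c_j$ (with $j<i$) has either been deleted (if $\deg_{H_0}(c_j)$ was even) or has degree $k+1$ (if odd). Because a lift at $c_j$ never alters the degree of any other contracted vertex, the outer induction reduces the work to constructing a single compatible lifting sequence at $c_i$ inside the finite graph $H_{i-1}$.

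For fixed $i$ I run an inner induction on the number $j$ of lifts performed at $c_i$, producing graphs $H^0 = H_{i-1}, H^1, \dots$ and paths $P_1, \dots, P_j \subseteq G[C_i]$ satisfying the compatibility conditions, with each $H^j$ being $(c_i,k)$-edge-connected and $\deg_{H^j}(c_i) = \deg_{H_{i-1}}(c_i) - 2j$. Lift $j+1$ requires exhibiting a pair of edges at $c_i$ that is simultaneously adjacent in the lifting graph $L(H^j,c_i,k)$, so that $(c_i,k)$-edge-connectivity is preserved, and adjacent in $\mathcal{E}_{\mathcal{R}_i}$, so that the required $P_{j+1}$ exists. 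Applying Lemma \ref{lm:amena} at $c_i$ in $H^j$, in Case 1 ($\deg_{H_0}(c_i)$ even) every intermediate degree is even, so the exceptional isolated-plus-balanced-complete-bipartite structure (which demands odd degree) cannot arise. In Case 2 ($\deg_{H_0}(c_i)$ odd) the intermediate degree stays odd and at least $k+3$ until the stopping value $k+1$, so by Lemma \ref{degree at most k+2} the exceptional structure cannot arise while lifting is still required. Hence in both cases the complement of $L(H^j,c_i,k)$ is disconnected, yielding a non-trivial bipartition $A \sqcup B$ of the remaining edges at $c_i$ whose cross-pairs are all liftable.

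The graph $\mathcal{E}_{\mathcal{R}_i}$ is connected because $\mathcal{R}_i$ lies in a single end of $G[C_i]$. I would strengthen the inner invariant to assert that the induced subgraph of $\mathcal{E}_{\mathcal{R}_i}$ on the edges still incident to $c_i$ remains connected, so that the bipartition $A \sqcup B$ is crossed by at least one $\mathcal{E}_{\mathcal{R}_i}$-edge, which becomes the desired pair $\{c_ix_{j+1}, c_iy_{j+1}\}$. Connectivity of the restricted $\mathcal{E}_{\mathcal{R}_i}$ is maintained by choosing each lift so that the two removed vertices occupy a convenient position relative to a spanning tree of the current restricted $\mathcal{E}_{\mathcal{R}_i}$ (for instance, a cross-pair whose endpoints are leaves, whenever available). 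Given the pair, $P_{j+1}$ is extracted from the infinitely many vertex-disjoint paths between $R_{c_ix_{j+1}}$ and $R_{c_iy_{j+1}}$ in $G[C_i]$ (each edge-disjoint from every ray of $\mathcal{R}_i$) by selecting one that avoids the finitely many edges used by $P_1,\dots,P_j$ and splicing it with prefixes of $R_{c_ix_{j+1}}$ and $R_{c_iy_{j+1}}$ ending at $x'_{j+1}$ and $y'_{j+1}$.

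The inner induction terminates at $\deg(c_i) = 0$ in Case 1 (after which $c_i$ is deleted) or at $\deg(c_i) = k+1$ in Case 2, and after processing all $i$ the graph $H_r$ has the stated degree structure. The $k$-edge-connectivity of $H_r$ then follows because $(c_i,k)$-edge-connectivity is preserved throughout and each surviving $c_i$ ends with degree $k+1 \geq k$. I expect the main obstacle to be the simultaneous-adjacency step: guaranteeing that the bipartition from Lemma \ref{lm:amena} admits a crossing $\mathcal{E}_{\mathcal{R}_i}$-edge at every stage forces a delicate compatibility between the lifting-graph partition and the spanning tree used to control connectivity of the restricted $\mathcal{E}_{\mathcal{R}_i}$. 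A secondary obstacle is the residual degree $\deg(c_i) = 2$ at the final step of Case 1, where Lemma \ref{lm:amena} does not apply; the unique remaining lift must be verified directly to preserve $(c_i,k)$-edge-connectivity, which is immediate since any path between non-$c_i$ vertices uses the two edges of $c_i$ only consecutively and hence survives the merger.
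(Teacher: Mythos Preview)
Your overall architecture matches the paper's proof: outer induction on $i$, inner greedy lifting at $c_i$, and the use of Lemmas~\ref{lm:amena} and~\ref{degree at most k+2} to exclude the isolated-vertex-plus-balanced-bipartite configuration while $\deg(c_i)$ is still above the target. The handling of the terminal $\deg(c_i)=2$ step is also the same.

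The place where you diverge is precisely the step you flag as the ``main obstacle'', and there you are making your own life unnecessarily hard. You work with the \emph{induced subgraph} of the fixed graph $\mathcal{E}_{\mathcal{R}_i}$ on the edges still incident with $c_i$, and then try to maintain its connectedness by steering each lift toward leaves of a spanning tree. That maintenance argument is not obviously sound: the bipartition $A\sqcup B$ produced by Lemma~\ref{lm:amena} bears no a~priori relation to any spanning tree of the restricted $\mathcal{E}_{\mathcal{R}_i}$, and there are simple configurations (e.g.\ $\mathcal{E}$ a star with centre the unique vertex of $B$) in which \emph{every} crossing pair disconnects the remainder. So as written this is a genuine gap.

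The paper sidesteps the issue entirely. At stage $j$ it does not use the induced subgraph of $\mathcal{E}_{\mathcal{R}_i}$ but rather the $\mathcal{R}$-graph $\mathcal{E}_{\mathcal{R}_{i,j}}$ built from the \emph{remaining} ray set $\mathcal{R}_{i,j}:=\mathcal{R}_{i}\setminus\{R_1,R'_1,\dots,R_j,R'_j\}$. Since those rays still all lie in one end of $G[C_i]$, $\mathcal{E}_{\mathcal{R}_{i,j}}$ is connected \emph{automatically}, for exactly the same reason $\mathcal{E}_{\mathcal{R}_i}$ was; no maintenance is required. This is also the right graph for the purpose: the compatibility condition only asks that $P_{j+1}$ avoid the rays \emph{not yet lifted} (and the finitely many earlier $P_\ell$), so adjacency in $\mathcal{E}_{\mathcal{R}_{i,j}}$ is precisely what is needed to extract $P_{j+1}$. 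With this one change your proof goes through and coincides with the paper's.

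One minor correction: ``a lift at $c_j$ never alters the degree of any other contracted vertex'' is false when both lifted edges are parallel edges to the same $c_q$, in which case $\deg(c_q)$ drops by~$2$. The paper notes this explicitly; for already-processed $c_q$ (degree $k+1$) such a lift would violate $(c_i,k)$-edge-connectedness and so is never chosen, while for unprocessed $c_q$ the parity of its degree is preserved, which is all the outer induction needs.
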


\begin{proof}

The graph $H_0$ as defined in the statement is clearly $k$-edge-connected as $G$ is. The proof involves two layers of induction. First, let $i$ be in $\{1,2,\dots,r\}$ and suppose that $H_q$ is defined for all $q<i$.

Then:

\begin{itemize} 

\item for each $q$ such that $1\leq q <i$, either $\deg_{H_0}
(c_q)$ is even, in which case $c_q$ does not exist in $H_{i-1}$, or $\deg_{H_{i-1}}(c_q)= k+1$; and

\item $H_{i-1}$ is $k$-edge-connected.

\end{itemize}

Now we find a sequence of lifts that is $(\mathcal R_i,k)$-compatible in $H_{i-1}$, by iteratively lifting at $c_i$. The reader should be aware that, since edges between different ones of the sets $C_j$ possibly exist, an edge incident with $c_i$ in $H_0$ may have been involved in lifts at some of the $c_q$ with $q<i$, which replaced this edge with another edge incident with $c_i$.  This new edge is still taken to be the initial edge of a ray in $\mathcal R_i$.  This has no impact on the arguments to follow.

Set $H_{i,0}:=H_{i-1}$ and $\mathcal R_{i,0}:=\mathcal R_i$. We define inductively $H_{i,j}$, $\mathcal R_{i,j} \subseteq \mathcal R_i$, and paths $P_j$ (as in the definition of a compatible sequence), for $j=1,2,\dots$ (up to the appropriate upper bound depending on the parity of $\deg(c_i)$). Remember that $L(H_{i-1},c_i,k)$ and $\mathcal E_{\mathcal R_{i}}$ have the same vertex set, which is the set of edges incident with $c_i$ (or $\delta(C_i)$ before contracting $C_i$ to $c_i$). It will be convenient to denote an adjacency in $\mathcal E_{\mathcal R_{i}}$ between, say, $e$ and $e'$ by writing $RR'$ as an adjacency between the rays $R$ and $R'$ in $\mathcal{R}_i$ beginning with $e$ and $e'$, respectively.

For $j\geq 1$, as long as $L(H_{i,j-1},c_i,k)$ and $\mathcal E_{\mathcal R_{i,j-1}}$ have an edge in common, let $R_{j}R'_{j}$ be an edge of both $L(H_{i,j-1},c_i,k)$ and $\mathcal E_{\mathcal R_{i,j-1}}$. By the definition of adjacency in $\mathcal E_{\mathcal R_{i,j-1}}$ there are infinitely many vertex-disjoint paths between $R_j$ and $R'_j$ in $G[C_i]$ that are also edge-disjoint from the other rays in $\mathcal R_{i,j-1}$. Thus there is a path $P_{j}$ in $G[C_i]$ consisting of initial segments of $R_j$ and $R'_j$ (subpaths of the rays beginning from the first vertex which are long enough to have a connection avoiding the finitely many $P_1,\dots,P_{j-1}$) and a path between them that does not have edges in common with any ray in $\mathcal R_{i,j-1}$. In this case we define $\mathcal{R}_{i,j}:= \mathcal R_{i,j-1}\setminus \{R_{j},R'_{j}\}$ (so the next path to define $P_{j+1}$ will be allowed to possibly go through $R_{j}$ and $R'_{j}$ but at a higher level than $P_j$) and define $H_{i,j}$ as the graph obtained from $H_{i,j-1}$ by lifting the pair of edges consisting of the first edge of $R_j$ and the first edge of $R'_j$. Note that since this pair of edges is $k$-liftable, the degree in $H_{i,j}$ of every vertex other than $c_i$ is at least $k$. In particular, each $c_q$ with $q<i$ that is in $H_{i, j-1}$ will have degree $k+1$ in $H_{i,j}$ as well, and the two lifted edges cannot be parallel edges between $c_i$ and $c_q$ for $q <i$ as this will reduce the degree of $c_q$ from $k+1$ to $k-1$. If $\deg_{H_0}(c_i)$ is odd, we stop lifting if the degree gets reduced to $k+1$ regardless of whether we can continue lifting further in this way.

 Suppose the preceding algorithm stops after finding $P_1,\dots,P_{j-1}$. This happens either when $L(H_{i,j-1},c_i,k)$ and $\mathcal E_{\mathcal R_{i,j-1}}$ have no more edges in common, or when $\deg_{H_{i,j-1}}(c_i)$ has reached $k+1$. We show in any case, depending on the parity of $\deg_{H_0}(c_i)$, that  $\deg_{H_{i,j-1}}(c_i)$ is either $0$ or $k+1$, so we can move on to the following vertex $c_{i+1}$ and the construction of $H_{i+1}$. Suppose not for a contradiction, then the alternative is that $\deg_{H_{i,j-1}}(c_i)$ is at least  either $2$ or $k+3$. First consider the case when $\deg_{H_{i,j-1}}(c_i)=2$, then the unique pair of edges incident with $c_i$ is evidently $k$-liftable, and there are infinitely many vertex-disjoint paths between the two rays beginning with this pair that are edge-disjoint from all the previously defined paths (only finitely many). This is a contradiction because it means the algorithm can continue for at least one more step to obtain degree $0$ at $c_i$. Thus, in case $\deg_{H_0}(c_i)$ is even, $\deg_{H_{i,j-1}}(c_i)\geq 4$. If $\deg_{H_0}(c_i)$ is odd, then $\deg_{H_{i,j-1}}(c_i)\geq k+3 \geq 5$. Thus, without loss of generality, we may assume now that $\deg_{H_{i,j-1}}(c_i)\geq 4$.

If $L(H_{i,j-1},c_i,k)$ has a disconnected complement, then the connectedness of $\mathcal E_{R_{i,j-1}}$ implies the that $L(H_{i,j-1},c_i,k)$ and $\mathcal E_{R_{i,j-1}}$ have a common edge, and we can lift one more pair as described above. Therefore, we may assume that $L(H_{i,j-1},c_i,k)$ has a connected complement.  

Since
$\deg_{H_{i,j-1}}(c_i)\geq 4$ and $k$ is even, Lemma \ref{lm:amena} shows in this case that $L(H_{i,j-1},c_i,k)$ consists of an isolated vertex and a balanced complete bipartite graph, whence $\deg_{H_{i,j-1}}(c_i)$ is odd, and so at least $k+3$. On the other hand, Lemma \ref{degree at most k+2} implies the contradiction that $\deg_{H_{i,j-1}}(c_i)\leq k+2$.\end{proof}

We remark here that if we additionally assume that there are no edges between the boundary-linked sets $C_1,\dots,C_r$, then in the case when $\deg(c_i)$ is odd, we can continue the sequence of lifts and reduce the degree of each $c_i$ with odd degree to $3$ while preserving the local edge-connectivity between any two vertices in $G-(\bigcup_{i=1}^rC_i)$ to be at least $k$. In that case, the specific structure for the lifting graph of an isolated vertex plus a balanced complete bipartite graph will be used. For a proof of this fact we refer the reader to a paper by Assem, Koloschin, and Pitz, \cite[Theorem ~ 3.2]{Max2023orientation}, where also it was proved in Theorem 2.2 of the same paper that if the graph is locally finite with at most only countably many ends, then a boundary-linked decomposition exists such that there are no edges between the different boundary-linked sets. For a proof of the case when $r=1$ only, we refer the reader to a paper by Assem \cite[Lemma~3.1]{assem2023towards}. This may be helpful in other situations, but not here, as we need to retain the edge-connectivity of the resulting graph to be at least $k$ (and so $\deg(c_i)$ must be at least $k$). 

\section{linking fan proposition}\label{sec:fan}

The purpose of this section is to prove a fact that will help us deal with the vertices $c_i$ having odd degree in the application of Lemma \ref{lm:compatibleLifting} to prove Theorem \ref{th:huckInfinite} for locally finite graphs. This proposition is a new tool which can be useful in other edge-connectivity related problems in infinite graphs, for example orientations as shown in Section 5. An \emph{initial segment} of a ray is a subpath of it containing its \emph{origin} (that is, the first vertex of the ray). The complement in a ray of an initial segment is a \emph{tail} of the ray.

The proposition shows that if we have $m$ edge-disjoint rays, and we want to avoid a certain finite construction $X$, then as long as $m$ is at most the assumed edge-connectivity, we can find a vertex far enough from $X$, and a fan from that vertex to the $m$ rays consisting of $m$ edge-disjoint paths that are also edge-disjoint from $X$.

This proposition was first presented as \cite[Proposition 3.2.11]{AmenaThesis} in the PhD thesis of the first author. We are grateful to Nathan Bowler for useful discussions regarding the proof, particularly for bringing to our attention an idea from a paper authored by Geelen and Joeris \cite[Lemma 7.1]{BensonJim}, and suggesting its use in the proof.

For a finite set $S$ of vertices and an end $\omega$ in a graph $G$, we denote by $C(S,\omega)$ the unique component of $G-S$ that contains the tails of the rays in $\omega$.

\begin{proposition}(Linking Fan Proposition)\label{linking fan proposition}

Let $k$ be a positive integer, $G$ a $k$-edge-connected locally finite graph, and let $\mathcal{R}$ be a set of pairwise edge-disjoint rays from one end in $G$ such that $|\mathcal{R}|\leq k$. If $X$ is any finite subgraph of $G$ that is edge-disjoint from $\mathcal{R}$, then there is a vertex $v$ and a set of $|\mathcal{R}|$ pairwise edge-disjoint paths from $v$ to $\mathcal{R}$, all edge-disjoint from $X$ and each containing an initial segment of arbitrarily large length of a ray in $\mathcal{R}$.
\end{proposition}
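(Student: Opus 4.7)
\emph{Plan.} Fix $N \geq 1$ and write $m := |\mathcal{R}|$; we produce $v$ and $m$ pairwise edge-disjoint paths, each ending on some $R_i$ at a vertex whose distance from $o_i$ along $R_i$ is at least $N$. The plan is to reduce to a cleanly structured finite region, apply the edge form of Menger's theorem in a suitable auxiliary contraction, and then remove unwanted detours by the Geelen--Joeris mechanism.

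\emph{Setup.} Enlarge $V(X)$ to a finite vertex set $W$ containing the first $N$ vertices of each $R_i$ and, by iteratively adjoining the finitely many vertices of each $R_i$ landing outside the $\omega$-tail component (there are only finitely many such ``bouncing'' vertices per ray since the rays belong to the end $\omega$), arrange that each $R_i$ meets $C := C(W,\omega)$ in a single tail $T_i$ entering $C$ at a vertex $a_i$ through a unique edge $f_i \in \delta(C)$. Because $V(X) \subseteq W$, any path confined to $G[C]$ is automatically edge-disjoint from $X$, and any endpoint on $T_i$ sits at distance $\geq N$ from $o_i$ along $R_i$. It therefore suffices to find $v \in C$ and $m$ pairwise edge-disjoint paths inside $G[C]$ from $v$ to distinct $a_i$.

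\emph{Core Menger step.} Contract $V(G) \setminus C$ in $G$ to a single super-vertex $s$, obtaining a $k$-edge-connected locally finite graph $G^{*}$ whose edges at $s$ are precisely those of $\delta(C)$, including the distinguished ray edges $f_1,\dots,f_m$. Augment $G^{*}$ by a super-sink $t^{*}$ adjacent to each $a_i$ via one fresh edge. For any $v \in C$, every $v$-$t^{*}$ cut in this augmented graph has at least $m$ edges: either the cut contains all $m$ edges $t^{*} a_i$, or it corresponds to a nontrivial cut of $G^{*}$, which by $k$-edge-connectivity has size at least $k \geq m$. The edge version of Menger's theorem (which holds in all graphs as long as the min cut is finite) then supplies $m$ edge-disjoint $v$-$t^{*}$ paths; removing the last edge from each yields $m$ edge-disjoint paths in $G^{*}$ from $v$ to the $m$ distinct vertices $a_i$.

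\emph{Main obstacle.} The obtained paths may still visit the super-vertex $s$, i.e.\ take detours through $V(G) \setminus C$, which contains $V(X)$ and could thus cross $E(X)$. This is where the Geelen--Joeris Lemma~7.1 enters: we eliminate detours iteratively, replacing each subpath of the form $\cdots x \to s \to y \cdots$ by a path entirely inside $G[C]$ that uses the $k$-edge-connectivity of $G^{*}$ at $s$ together with the abundance of alternative boundary edges in $\delta(C) \setminus \{f_1,\dots,f_m\}$ to re-enter $C$, all while preserving pairwise edge-disjointness of the $m$ paths. The combinatorial heart of the proof is exactly the existence of such a swap; this is the step Geelen--Joeris is tailored to provide. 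Once all detours are removed the $m$ paths live inside $G[C]$, are pairwise edge-disjoint, avoid $X$, and end at distinct $a_i$ on the tails $T_i$, giving the required linking fan with initial segments of length at least $N$.
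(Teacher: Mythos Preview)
Your setup and the ``Core Menger step'' are fine, but the proof breaks down at the ``Main obstacle'' paragraph, which is where all the real content of the proposition lies. You assert that detours through $s$ can be eliminated by invoking Geelen--Joeris Lemma~7.1, but you neither state that lemma nor show how it applies here; in fact it concerns vertex-disjoint paths, and there is no black-box ``swap'' lemma that lets you replace a subpath $x \to s \to y$ by an $x$--$y$ path inside $G[C]$ edge-disjoint from the other $m-1$ paths. Such a replacement need not exist: $G[C]$ can have arbitrarily low edge-connectivity, and the few $x$--$y$ routes available in $G[C]$ may all share edges with the other paths. Nothing in your argument excludes this, and your choice of $v$ is completely unconstrained (``for any $v\in C$''), so you have no leverage.

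The paper's proof uses the Geelen--Joeris idea quite differently, and the difference is exactly what your attempt is missing. Before any Menger step, the paper builds $m+1$ nested finite connected ``layers'' $L_0,L_1,\dots,L_m$ with $V(X)\subseteq V(L_0)$, chooses $v$ \emph{beyond} $L_m$, and records two families of paths that each cross every layer: the $m$ ray-segments $P^R$ from $L_0$ to $L_m$, and $m$ edge-disjoint $v$--$L_1$ paths $Q_1,\dots,Q_m$ lying in the $\omega$-component past $L_0$. Only then does it contract $L_0$ to a vertex $u$ and apply Menger inside the finite union $H$ of the $L_j$, $Q_i$, and $P^R$: any would-be $u$--$v$ cut of size $<m$ must miss some $Q_i$, some $P^R$, and some $L_j$, whose union is connected and joins $u$ to $v$---contradiction. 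This pigeonhole over layers is the actual Geelen--Joeris mechanism; it is a counting argument, not a detour-swapping procedure. Note also that the paper contracts the part \emph{containing} $X$ (so that $H$ is edge-disjoint from $X$ by construction), which is the opposite of your contraction. Without the layered structure and the specific placement of $v$, there is no argument.
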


\begin{proof}

Denote $|\mathcal{R}|$ by $m$ and the end containing $\mathcal{R}$ by $\omega$. We inductively define a sequence of pairwise vertex-disjoint finite subgraphs $L_0,L_1,  \dots, L_m$, each connected, except possibly $L_0$.

For each ray $R\in \mathcal R$, let $I_R$ be an initial segment of $R$ of any desired large length. Note that the rays of $\mathcal{R}$ are only edge-disjoint, so it is possible that a vertex after the segment $I_R$ on a ray $R$ is also a vertex on the segment $I_{R'}$ in another ray $R'$. Define $L_0$ as the subgraph of $G$ induced by $V(X) \cup \bigcup_{R\in \mathcal{R}} V(I_R)$, and for each ray $R\in \mathcal R$, let $R_0$ be the initial segment of $R$ from its origin to its last visit to $L_0$ (such a visit exists as $L_0$ consists of a finite number of vertices).

Note also that the segment $R_0$ contains the segment $I_R$ which was of arbitrarily large length. Let $B_0$ be the set of vertices that are on the initial segments $R_0$ for $R\in \mathcal{R}$, and let $C_1:=C(V(L_0) \cup B_0,\omega)$.

Note that $N(V(L_0) \cup B_0)$ (the set of vertices in $G-(V(L_0) \cup B_0)$ adjacent to a vertex in $V(L_0) \cup B_0$) is finite because $V(L_0) \cup B_0$ is finite and $G$ is locally finite. Now define $L_1$ as any finite connected subgraph of $C_1$ containing $N(V(L_0) \cup B_0)\cap V(C_1)$. Then $L_1$ is vertex-disjoint from $X$ (contained in $L_0$) and from the initial segments $R_0$ for $R\in \mathcal R$, and it separates them from $C_1$.

For an integer $n\in \{2,\dots,m\}$, assume that $C_i$ and $L_i$ are defined for all $1\leq i<n$, and that $L_i$ separates $B_0$ and $L_j$ with $j<i$ from $C_i$. Then let $C_n:= C(V(L_{n-1}), \omega)$. Since $L_{n-1}$ is finite and $G$ is locally-finite, $N(L_{n-1})$ too is finite. Let $L_n$ be any finite connected subgraph of $C_n$ that includes $N(L_{n-1}) \cap V(C_n)$. Then $L_n$ separates $L_{n-1}$, and hence also $B_0$ and all $L_j$ for $j<n $, from $C_n$ as $C_n \subseteq C_{n-1} \subseteq \dots \subseteq C_1$ and $B_0$ and $L_0$ are outside $C_1$.

In particular, the initial segments $R_0$ for $R\in \mathcal{R}$ (namely the set $B_0$, which is outside $C_1$) is vertex-disjoint from $L_j$ for all $j>0$ (which are inside $C_1$).

 Fix a vertex $v$ in $C_{m+1} := C(V(L_{m}), \omega)$. Each ray $R$ in $\mathcal R$ has a subpath $P^R$ from its last vertex in $L_0$ (that is the last vertex of $R_0$) to its first vertex in $L_m$. Since $m\leq k$ (the edge-connectivity of $G$), there are $m$ edge-disjoint paths from $v$ to $L_1$. These paths are contained in $C_1$, and consequently vertex-disjoint from $L_0$ (which includes $X$), because $L_0$ is outside $C_1$, while $v$ and $L_1$ are inside it, and $L_1$ separates $L_0$ from $C_1$. Denote these paths by $Q_1,\dots, Q_m$. By construction, each $L_i$, for $i=1,\dots, m$, separates $\bigcup_{j<i} L_j$ from $C_i$, therefore, each of the paths $Q_1,\dots, Q_m$, and $P^R$ for $R\in \mathcal{R}$, has a non-empty (vertex or edge) intersection with each one of the $m$ subgraphs $L_1,\dots, L_m$. Moreover, each of the paths $P^R$ for $R\in \mathcal{R}$, is vertex-disjoint from $X$ except possibly for its end-vertex in $L_0$ (because this is the last visit of $R$ to $L_0 \supseteq X$.)

The following argument is inspired by \cite[Lemma 7.1]{BensonJim} for vertex-disjoint paths. Let $H$ be the graph obtained from the union of $\{v\}$, $\bigcup_{j=0}^{m}L_{j}$, and all the paths $Q_1,\dots, Q_m$, and $P^R$ for $R\in \mathcal{R}$, by contracting $L_0$ to a vertex $u$. Note that both $u$ and $v$ have degree $m$ in $H$, and $H$ is edge-disjoint from $X$ (there could be an edge of $X$ between two vertices in $L_0$ but now these are contracted). 

Suppose for a contradiction that there does not exist $m$ edge-disjoint paths between $v$ and $u$ in $H$. Then by Menger's theorem, there is a set $F$ of less than $m$ edges in $H$ such that $u$ and $v$ are in distinct components of $H-F$. The set $F$ is disjoint from at least one of the paths $Q_1, \dots, Q_m$, one of the paths $P^R$ for $R\in \mathcal{R}$, and one of the subgraphs $L_1,\dots, L_m$. Let these respectively be $Q$, $P$, and $L$, then since $L$ is connected and each of $Q$ and $P$ has a non-empty intersection with it, the union $Q\cup L \cup P$ contains a path between $v$ and $u$ in $H-F$, a contradiction.

 Now the $m$ edge-disjoint paths between $v$ and $u$ in $H$ give us $m$ edge-disjoint paths in $G$ from $v$ to $L_0$ each having as its edge incident with $L_0$ the first edge of $R$ in $P^R$ for a distinct $R\in \mathcal{R}$ (these were the $m$ edges incident with $u$ in $H$ and each one of them is the first edge on a ray $R$ after the segment $R_0$). 
 
 The initial segment $R_0$ of each ray $R\in \mathcal{R}$, from its origin to its last visit to $L_0$, is outside $C_1$ by construction, and edge-disjoint from $H$. Thus, adding these initial segments to the $m$ paths we found gives the desired set, $\mathcal{P}$, of $m$ paths where each origin of a ray of the $m$ rays in $\mathcal{R}$ is the end-vertex (other than $v$) of a distinct one of the $m$ paths in $\mathcal{P}$ (even if two rays have the same origin). Each one of the $m$ initial segments which the paths of $\mathcal{P}$ end in contains, for a distinct $R$, the initial segment $R_0$, which contains the segment $I_R$ of arbitrarily large length, and the first edge after $R_0$ on $R$. \end{proof}

Note that it is possible that a path of $\mathcal{P}$ has edges in common with a ray $R$ in $\mathcal{R}$ (in particular with the path $P^R$ in $H$) before it ends in an initial segment of another ray, say $R' \in \mathcal{R}$, but in any case, the set of paths $\mathcal{P}$ is pairwise edge-disjoint, and is edge-disjoint from $X$.

\section{Generalization of Huck's theorem to infinite graphs}\label{sec:proof}

In this section we prove Theorem \ref{th:huckInfinite}, which shows that Huck's theorem for finite graphs, Theorem \ref{th:huck}, extends to all infinite graphs. The reduction from general infinite graphs to locally finite graphs is adapted from Thomassen \cite{thomassen2016orientations}. Our first step is the reduction from arbitrary infinite graphs to countable graphs. Given a $k$-edge-connected infinite graph $G$, and a finite set $T$ of vertices in it, the following argument of Thomassen shows that $G$ has a countable $k$-edge-connected subgraph $G_{\omega}$ containing $T$. 

Let $G_0:=G[T]$. For each $i\geq 1$, define $G_i$ to be the finite graph obtained from $G_{i-1}$ by taking the union of subgraphs $H_{\{x,y\}}$ over all $\{x,y\}\subseteq V(G_{i-1})$, where $H_{\{x,y\}}$ is a finite subgraph that consists of the union of $k$ edge-disjoint paths between $x$ and $y$ in $G$. The union $G_\omega$ of the $G_i$ evidently contains $T$, is  $k$-edge-connected, and is countable, as required. Thus, it suffices to prove Theorem \ref{th:huckInfinite} for countable graphs. The reduction from countable graphs to locally-finite graphs is more subtle.  Thomassen \cite{thomassen2016orientations} also shows how to do this.

A \emph{splitting} \cite{thomassen2016orientations} of a graph $G$ is a graph $G'$ obtained from $G$ by replacing each vertex $u$ by a set $V_u$ of vertices such that $G'$ has no edge between two vertices in the same $V_u$, and the identification in $G'$ of all vertices of $V_u$ into a single vertex, for each $u \in V(G)$, gives us $G$ back. Note that the edge sets of $G$ and $G'$ are in bijection, in particular the edges with an end-vertex in $V_u$ in $G'$ are precisely the edges incident with $u$ in $G$ after identification.

The following result of Thomassen necessary for the reduction to locally finite graphs is not as simple as the preceding discussion, so we omit its proof which can be read in \cite[Theorem 9]{thomassen2016orientations}. Recall that a \emph{block} is a maximal $2$-vertex-connected subgraph (that is, without a cut-vertex).

\begin{lemma}\cite[Theorem 9]{thomassen2016orientations}\label{countable to locally finite}
Let $k$ be a positive integer, and let $G$ be a countably infinite $k$-edge-connected graph. Then $G$ has a splitting $G'$ that is $k$-edge-connected, and each block of $G'$ is locally finite. \hfill\eop
\end{lemma}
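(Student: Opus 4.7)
My plan is to split each vertex $u$ of $G$ into a suitable set $V_u$ of copies so that the resulting graph $G'$ is still $k$-edge-connected and, moreover, every copy lies only in blocks in which it has finite degree. The construction has to be driven by a countable pool of finite certificates of $k$-edge-connectivity, and the splitting at $u$ has to be dictated by which edges at $u$ are forced to remain together by these certificates.

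First, since $G$ is countable, I would enumerate $V(G) = \{v_1, v_2, \ldots\}$, and for every pair $i < j$ fix a Menger system $\mathcal{M}_{ij}$ of $k$ pairwise edge-disjoint $v_i$-$v_j$-paths in $G$; let $\mathcal{W} = \bigcup_{i<j} \mathcal{M}_{ij}$, a countable family of finite paths. At each vertex $u$, I would then define an equivalence relation $\sim_u$ on the set $E(u)$ of edges incident to $u$, generated by the two rules: (i) $e \sim_u f$ whenever $e$ and $f$ are the two edges at $u$ of some path $P \in \mathcal{W}$ for which $u$ is an internal vertex; and (ii) the $k$ first (respectively, last) edges of the paths in $\mathcal{M}_{ij}$ are equivalent whenever $u = v_i$ (respectively, $u = v_j$). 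Taking the transitive closure gives a partition of $E(u)$. Let $V_u$ be the set of these classes and let $G'$ have vertex set $\bigsqcup_u V_u$, with each edge $e = uv$ of $G$ joining its $\sim_u$-class to its $\sim_v$-class. Then $G'$ contains no edge within any $V_u$, and identifying the vertices of $V_u$ for each $u$ recovers $G$, so $G'$ is a splitting of $G$ by construction.

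For $k$-edge-connectivity of $G'$, each system $\mathcal{M}_{ij}$ lifts directly to a set of $k$ edge-disjoint paths in $G'$ between the canonical classes of $v_i$ and $v_j$, because rule (i) forces consecutive internal edges to lie in the same class. To handle arbitrary pairs of copies I would iteratively enlarge $\mathcal{W}$, adding Menger systems between every newly formed pair of copies and refining the equivalence relations, in a back-and-forth construction that stabilizes after countably many rounds. For block local finiteness, the crucial observation is that two edges at a single copy $u' \in V_u$ can lie on a common cycle in $G'$ only via a finite subgraph of $G$ whose internal pairings at $u$ must already be reflected in $\sim_u$; hence each block meets $u'$ in only finitely many edges.

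The main obstacle is the delicate interplay between the two requirements. Splitting $V_u$ coarsely preserves $k$-edge-connectivity but can leave an infinite block, while splitting it finely ensures local finiteness of blocks but risks disconnecting some pair of copies below the required $k$ paths. The balance must be achieved by precisely controlling which edges at $u$ are merged, so that every Menger witness in $\mathcal{W}$ survives intact while no superfluous merging occurs. Proving that the back-and-forth construction actually stabilizes with both properties simultaneously is, I expect, the technical heart of the argument, and the reason the paper chooses to invoke \cite[Theorem~9]{thomassen2016orientations} as a black box.
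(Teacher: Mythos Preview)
The paper does not prove this lemma at all: it explicitly writes that the result ``is not as simple as the preceding discussion, so we omit its proof'' and cites \cite[Theorem~9]{thomassen2016orientations}. So there is nothing to compare your argument against within the paper itself; as you correctly guessed in your last sentence, the lemma is used as a black box.

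As for your sketch, there are two concrete gaps. First, the local-finiteness step does not go through. You claim that if two edges $e,f$ at a copy $u'$ lie on a common cycle in $G'$, then the internal pairings of that cycle at $u$ ``must already be reflected in $\sim_u$'', and that this bounds the number of edges of a block at $u'$. But a cycle in $G'$ through $u'$ need have nothing to do with the family $\mathcal{W}$: its visits to other copies in $V_u$ are constrained by $\sim_u$ only in the trivial sense that the two edges there are in the same class, not that they arose from a generator of $\sim_u$. Nothing prevents infinitely many edges at $u'$ from being linked pairwise by such cycles, so the block containing $u'$ could still have infinite degree at $u'$. Second, the back-and-forth is underspecified and may collapse. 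Adding Menger systems between newly created copies adds generators to the $\sim_u$, which \emph{coarsens} the partitions (merges classes), not refines them; in the limit every $V_u$ could reduce to a single class and you recover $G'=G$, having achieved nothing. You give no mechanism that prevents this trivial fixed point while still guaranteeing $k$-edge-connectivity between \emph{all} pairs of copies. Thomassen's actual construction is more delicate: it builds the splitting inductively along an enumeration of the edges, controlling at each finite stage both which edges must stay together (for connectivity) and which must be separated (for local finiteness of blocks), rather than trying to derive the partition globally from a fixed pool of Menger witnesses.
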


Now let $G$ be a countably infinite $(k+1)$-edge-connected graph, and let $G'$ be the graph obtained by Lemma \ref{countable to locally finite}. Then each block of $G'$ is $(k+1)$-edge-connected (since the blocks of any graph have the same edge-connectivity as the graph). The $k$-linkage problem on $G$ reduces to (possibly smaller) linkage problems on finitely many of the locally finite blocks of $G'$ as follows.

Recall that the \emph{block graph} of $G'$ is the bipartite graph with vertex set the cut-vertices of $G'$ as one side and a vertex for every block of $G'$ as the other side, where there is an edge $aB$ between a cut-vertex $a$ and a block $B$ if and only if $a$ is in the block $B$ in $G'$ \cite[Page 61]{Diestel}. It is not hard to see that the block graph of a connected graph is a tree.

Given a set $T=\{(s_i,t_i):i=1,\dots,k\}$ of $k$ pairs of vertices in $G$, for each $u\in V(T)$ pick any vertex $u'$ from $V_u$ in $G'$. Let us now consider the linkage problem of $\{(s'_i,t'_i):i=1,\dots,k\}$ in $G'$. First, for each $i$, determine a block containing $s'_i$ and a block containing $t'_i$ in $G'$ (a vertex can be in more than one block only if it is a cut-vertex). There is a unique path $P_i$ in the block tree of $G'$ between these two blocks, mark all the blocks that are on this path. Now consider the collection consisting of all the marked blocks for all $i$. For each block $B$ of the blocks in this collection, consider all the values of $i\in\{1,\dots,k\}$ such that $B$ is on the path $P_i$. Now we determine which pairs of vertices we need to find a linkage for in $B$. Note that each $P_i$ is a path in a bipartite graph (tree) that alternately goes between blocks and cut-vertices of $G'$. For each $i$ such that $B$ is on $P_i$, either $B$ is between two cut-vertices, and in this case we take this as a pair of vertices to link, or $B$ is an end-vertex of the path $P_i$ in the block graph, then in this case we take the pair of vertices to be $s'_i$ (or $t'_i$) and the cut-vertex directly following (preceding) $B$ on $P_i$. This gives us a linkage problem of at most $k$ pairs in the locally finite $k$-edge-connected block $B$. By Theorem \ref{th:huckInfinite} which we prove below, such a linkage exists in $B$ if $k$ is odd. These linkages in finitely many blocks of $G'$ together give us a linkage in $G'$ of $\{(s'_i,t'_i):i=1,\dots,k\}$ as follows. For each $i$, we have the following path between $s'_i$ and $t'_i$ consisting of segments from the linkages in the blocks. We have the path $P_i$ in the block graph connecting a block containing $s'_i$ and a block containing $t'_i$ and by construction there is a path from the linkage in the first block on $P_i$ between $s'_i$ and the first cut-vertex on $P_i$, then for any two consecutive cut-vertices on $P_i$ a path connecting them from the linkage in the block between them on $P_i$, and finally a path between the last cut-vertex on $P_i$ and $t'_i$ from the linkage in the last block on $P_i$.

This linkage in $G'$ naturally gives a linkage in $G$, perhaps with edge-disjoint walks rather than paths, as vertex identification preserves edge-disjointness. This completes the reduction from countable graphs to locally finite graphs.

  In addition to the work in the earlier sections, we shall need the following interesting result of Thomassen \cite[Theorem 1]{thomassen2016orientations}. It was also proved in \cite[Theorem 2.2]{Max2023orientation} that if we assume that there are only countably many ends, then we can have a boundary-linked decomposition such that there are no edges between the boundary-linked sets (that is, they coincide with the connected components of $G-A$ for a set $A\supseteq A_0$).

\begin{theorem}\cite[Theorem 1]{thomassen2016orientations} \label{thm boundary linked}
Let $G$ be a connected locally finite graph. If $A_0$ is a vertex set such that the boundary $\delta(A_0)$ is finite, then $V(G)\setminus A_0$ can be partitioned into finitely many pairwise disjoint vertex sets each of which is either a singleton or a boundary-linked set with finite boundary.
\end{theorem}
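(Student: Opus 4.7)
The plan is to construct a set $A \supseteq A_0$ with $A \setminus A_0$ finite such that every infinite component of $G - A$ is boundary-linked; the required partition of $V(G) \setminus A_0$ is then the family of these infinite components (taken as boundary-linked sets), together with singletons for every vertex in $A \setminus A_0$ and every vertex in a finite component of $G - A$. Because $G$ is connected and $\delta(A_0)$ is finite, every component of $G - A_0$ contains an endpoint of some edge of $\delta(A_0)$, so $G - A_0$ has at most $|\delta(A_0)|$ components, only finitely many of which are infinite; call these $C_1, \ldots, C_m$, and handle them one at a time.

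Fix an infinite component $C$ with $k = |\delta(C)|$, and let $B \subseteq C$ be the set of endpoints in $C$ of the $k$ boundary edges. I would proceed by induction on $k$. By the edge version of Halin's infinite Menger theorem, for each end $\omega$ of $G[C]$ the maximum number of edge-disjoint rays in $G[C]$ from $B$ to $\omega$ equals the minimum size of a finite edge cut of $G[C]$ separating $B$ from the tails of $\omega$. If some end $\omega$ admits $k$ such rays, one can adjoin a super-source incident once with each edge of $\delta(C)$ and reapply Menger to arrange that each edge of $\delta(C)$ is the first edge of exactly one ray; in this case $C$ is itself boundary-linked and contributes a single piece to the partition.

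If no end of $G[C]$ admits $k$ edge-disjoint rays from $B$, then for every end $\omega$ there is a finite edge cut $F_\omega$ of size strictly less than $k$ with deep side $D_\omega \subseteq C \setminus B$ containing the tails of $\omega$. The end space of the locally finite graph $G[C]$ is compact in the natural topology whose basic open sets are ``ends with tails in a prescribed component of $G[C]$ after removing a finite edge cut,'' so finitely many neighbourhoods $D_{\omega_1}, \ldots, D_{\omega_r}$ already cover the end space. Using submodularity/uncrossing of the minimum cuts $F_{\omega_j}$, I would replace the family $\{D_{\omega_j}\}$ by a pairwise disjoint refinement $D'_1, \ldots, D'_{r'}$ of deep sides in $C$, each with $|\delta(D'_j)| < k$, chosen so that $C \setminus \bigcup_j D'_j$ is a finite set. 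These finitely many leftover vertices are added to the singleton part of the partition, and the induction hypothesis applies to each $D'_j$.

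The delicate point will be the uncrossing step: using the submodular inequality $|\delta(D_1 \cup D_2)| + |\delta(D_1 \cap D_2)| \le |\delta(D_1)| + |\delta(D_2)|$, one must replace the possibly overlapping deep sides $D_{\omega_j}$ by a pairwise disjoint family of deep sides, each still with boundary of size less than $k$, while ensuring every end remains covered and that the leftover shallow region $C \setminus \bigcup_j D'_j$ is finite. Once this uncrossing is achieved, the strict inequality $|\delta(D'_j)| < k$ together with the base case $k = 1$ (where the single boundary edge trivially extends to a ray into any end of $G[C]$) drives the induction on $k$ and terminates it after at most $k$ levels per component, producing the required finite partition of $V(G) \setminus A_0$.
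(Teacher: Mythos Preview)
The paper does not give its own proof of this theorem: it is quoted verbatim from Thomassen \cite[Theorem~1]{thomassen2016orientations} and used as a black box. So there is no in-paper argument to compare against; I can only assess your sketch on its own merits.

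Your overall architecture---treat each infinite component $C$ of $G-A_0$ separately, add a super-source through the edges of $\delta(C)$, apply an edge Menger/Halin theorem towards each end, invoke compactness of the end space to extract a finite cover by small cuts, and induct on $|\delta(C)|$---is a sound plan and is in the spirit of Thomassen's argument. The base case $k=1$ and the ``good'' case (some end admits $k$ edge-disjoint rays) are fine; the super-source trick correctly forces one ray through each boundary edge.

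The real issue is exactly the step you flag as delicate, and your sketch does not close it. From the submodular inequality
\[
|\delta(D_1\cup D_2)|+|\delta(D_1\cap D_2)|\le |\delta(D_1)|+|\delta(D_2)|<2k
\]
you cannot conclude that either $|\delta(D_1\cup D_2)|<k$ or $|\delta(D_1\setminus D_2)|<k$ without a lower bound on $|\delta(D_1\cap D_2)|$ or $|\delta(D_1\cup D_2)|$, and the theorem has \emph{no} edge-connectivity hypothesis on $G$ (it is merely connected and locally finite). So the naive uncrossing that replaces two overlapping deep sides by disjoint ones, each still of boundary $<k$, can fail. Likewise, passing to the infinite components of $G[C]$ after deleting $\bigcup_j F_{\omega_j}$ gives pairwise disjoint pieces, but their boundaries may pick up edges from several $F_{\omega_j}$ at once and exceed $k-1$, so the induction hypothesis does not apply. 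You need an additional idea here---for instance, choosing each $F_{\omega_j}$ to be the $s$--$\omega_j$ cut \emph{nearest to $\omega_j$} (the unique minimal-side minimum cut, which exists by submodularity) and arguing that such extremal cuts are automatically laminar, or reorganising the induction so that the relevant parameter (not just $|\delta(C)|$) strictly decreases when you peel off one deep side at a time. As written, the proposal identifies the right tools but leaves the key combinatorial step unproved.
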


\textbf{Now for the main contribution of this paper:} The proof of Theorem \ref{th:huckInfinite}.

\begin{proof}
As discussed earlier in this section, we may assume $G$ is locally finite.  Recall that, for this theorem, $k$ is odd and $G$ is $(k+1)$-edge-connected. Let $T$ denote the given set of $k$ pairs of vertices and let $A$ be the set of vertices that appear in those pairs. Since $G$ is locally finite, $\delta(A)$ is finite. By Theorem \ref{thm boundary linked}, $V(G)\setminus A$ can be partitioned into finitely many pairwise disjoint vertex sets that are either singletons or boundary-linked sets with finite boundary. 

Adding the singletons to $A$ yields a finite set $A'$ containing the vertices of the pairs in $T$ such that $V(G)\setminus A'$ is partitioned into finitely many pairwise disjoint sets $C_1,\dots, C_r$ such that each $C_i$ is boundary-linked with finite boundary.  For each $i=1,2,\dots,r$, there is a set $\mathcal R_i$ of pairwise edge-disjoint rays, all in the same end of $G[C_i]$ such that the set consisting of the first edge of each ray is exactly $\delta(C_i)$, but otherwise the rays are contained in $G[C_i]$.

Let $G'$ be the graph obtained from $G$ by contracting each $C_i$ to a single vertex $c_i$. The idea is to find a linkage of $T$ in the finite graph $G'$ such that the paths of the linkage that go through $c_1,\dots,c_r$ are replaceable with actual paths in $G$. 

Applying Lemma \ref{lm:compatibleLifting} with the even connectivity $k+1$, there exists a sequence of lifts, consisting of lifts at $c_1, c_2, \dots,c_r$ in order,
such that for every $i\in \{1,\dots,r\}$, the part of the sequence performed at $c_i$ is $(\mathcal{R}_i,k+1)$-compatible in the graph obtained from $G'$ after doing the lifts at $c_j$ for all $j<i$ in order. There are $\deg_{G'}(c_i)/2$ lifts done at $c_i$, if $\deg_{G'}(c_i)$ is even, and $(\deg_{G'}(c_i)-(k+2))/2$ lifts done at $c_i$, if $\deg_{G'}(c_i)$ is odd. Moreover, the finite graph $H$ that is the result of performing all these lifts is also $(k+1)$-edge-connected as $G$ is, and,
of the vertices $c_1, \dots, c_r$ only those with odd degree in $G'$ are contained in $H$ where they have degree $k+2$.

As $T$ is in $A' \subseteq V(H)$, Huck's Theorem \ref{th:huck} shows that $H$ has a weak $T$-linkage $\{P_1,P_2,\dots,P_k\}$.  

 To turn this into a linkage in $G$, first replace each edge $e$ of the linkage in $H$ that has arisen by lifts at various $c_i$ by a path in $G$ as follows. Beginning with $e$ and the two edges it directly resulted from by lifting, iteratively in order, for each $c_i$ contributing a lift of $e_i$ and $e'_i$ towards the formation of $e$, replace this lift with the path consisting of $e_i$ and $e'_i$ connected by the $e_ie'_i$-path in $G[C_i]$ which resulted from compatible lifting according to Lemma \ref{lm:compatibleLifting}. 
 

 Note that an edge in $H$ which resulted from lifting does not necessarily have its two end-vertices in $A$. This is because an edge with one end-vertex in $C_i$ and one end-vertex in $C_j$ for $i\neq j$ possibly exists. So it is possible that an edge $e$ in $H$ incident with $c_i$ is the result of lifting two edges at $c_j$ for $j\neq i$, where one of the two lifted edges is an edge between $c_i$ and $c_j$. There is a unique edge $e'$ in the replacement path for $e$ that is incident with $c_i$ as $e$ is. But $e'$ is an edge in $G$ (in $\delta(C_i)$).

Next, observe that if some $P_j$ contains an edge incident in $H$ with a $c_i$, then $\deg_{P_j}(c_i)=2$ (recall that the end-vertices of $P_j$ are in $A'$, so different from $c_i$). Thus, the set $E_i$  of edges in $\bigcup_{j=1}^k P_j$ incident in $H$ with $c_i$, is of even size, however, $\deg_{H}(c_i)=k+2$ is odd, so $|E_i|\le k+1$. Let $E'_i:= \{e': e\in E_i\}$ where $e'$ is the unique edge in $\delta(C_i)$ corresponding to $e$ as described in the previous paragraph in case $e$ resulted from lifts, and $e'$ is $e$ otherwise.

 Let $\mathcal R'_i$ consist of the at most $k+1$ rays in $\mathcal R_i$ that have their initial edges in $E'_i$.  By the definition of a compatible lifting, and since the edges of $E'_i$ were not lifted, each ray in $\mathcal R'_i$ is edge-disjoint from all of the paths constructed in the compatible liftings at $c_i$.  

 Let $X_i$ be the set of edges in $C_i$ occurring in all those paths which resulted from compatible lifting. Since $k+1$ is the assumed edge-connectivity of $G$ and $|E'_i|\le k+1$, we can apply Proposition \ref{linking fan proposition} to $\mathcal R'_i$. Thus, there is a vertex $v_i$ in $C_i$ and a set $\mathcal Q_i$ of $|E'_i|$ pairwise edge-disjoint paths in $\mathcal C_i$ from $v_i$ to the initial edges of the rays in $\mathcal R'_i$ that are edge-disjoint from $X_i$.  

Suppose $e,f$ are both incident with $c_i$ in $H$ and are consecutive in a path $P_j$ for $j\in \{1,\dots,k\}$ (so $e,f \in E_i$). Then there are rays $R_e$ and $R_f$ in $\mathcal R'_i$ containing $e$ and $f$ and corresponding paths $Q_e$ and $Q_f$ in $\mathcal Q_i$ containing non-trivial initial segments of $R_e$ and $R_f$.  In $G$ we connect $e$ and $f$ using $Q_e\cup Q_f$, completing the $T$-linkage in $G$. \end{proof}

\section{Orientations of infinite graphs}\label{sec:orientations}

In this section, we demonstrate that Proposition \ref{linking fan proposition} is also useful for proving the existence of highly connected orientations in infinite graphs. We prove in Theorem \ref{orientation theorem} that $(2k+1)$-edge-connected infinite graphs admit a $k$-arc-connected orientation. This brings us closer to the conjecture of Nash-Williams that $2k$-edge-connectivity suffices for infinite graphs as it does for finite graphs \cite{nash1960orientations}.

   Recall that an \emph{orientation} of a graph $G$ is obtained by replacing each edge $uv$ of $G$ with an oriented arc, either $(u,v)$ or $(v,u)$, and that an orientation, or the resulting directed graph, is $k$-\emph{arc-connected} if for any two vertices $x$ and $y$ there are $k$ arc-disjoint paths between them directed from $x$ to $y$ (and $k$ such paths directed from $y$ to $x$).

   In 1960 Nash-Williams proved for finite graphs that every $2k$-edge-connected graph admits a $k$-arc-connected orientation \cite{nash1960orientations}. He conjectured that the same is true for infinite graphs. After more than $50$ years Thomassen showed in 2016 that an edge connectivity of $8k$ suffices for infinite graphs to have a $k$-arc-connected orientation \cite{thomassen2016orientations}. Then in 2023, in \cite{assem2023towards}, Assem proved that for $1$-ended locally finite graphs, an edge connectivity of $4k$ is enough. Pitz remarked that, in fact, $4k$-edge-connectivity is a sufficient condition for all infinite graphs, and that this can be shown by combining an observation on the extension of orientations in finite graphs from Eulerian (open or closed) subgraphs, with Thomassen's proof. This remark appears in the foreword of \cite[Section 6]{Max2023orientation} where in the same paper Assem, Koloschin, and Pitz showed that the conjecture (of $2k$) is true for locally finite graphs with countably many ends, still using techniques that in outline are guided by Thomassen's approach.  

We prove, again following Thomassen’s general approach, that $(2k+1)$-edge-connectivity is sufficient for all infinite graphs. To do this, we first present the following Theorem \ref{immersion result} where Proposition \ref{linking fan proposition} is used in finding a highly connected immersion. The statement of the theorem contains a detailed description of the immersion graph, but for a more compact statement we also have Corollary \ref{immersion corollary} below.

This immersion result is similar in nature to \cite[Theorem 4]{thomassen2016orientations}, \cite[Theorem 4.2]{assem2023towards}, and \cite[Theorem 3.2]{Max2023orientation}. The good thing about this immersion theorem here in comparison to those theorems is that it avoids a different restriction of each one of them as follows. The connectivity of the immersion is only $1$ less than the connectivity of the graph, unlike \cite[Theorem 4]{thomassen2016orientations} where the connectivity is reduced by a factor of $2$. There is no restriction on the number of ends, unlike \cite[Theorem 4.2]{assem2023towards} where the graph is assumed to be $1$-ended and unlike \cite[Theorem 3.2]{Max2023orientation} where it is assumed to have only countably many ends. Finally, there is no assumption preventing the presence of edges between the sets of a boundary-linked decomposition, unlike \cite[Theorem 3.2]{Max2023orientation} where those sets are assumed to coincide with the connected components.

Recall that an \emph{immersion} of a graph $H$ in a graph $G$ is a subgraph $H'$ of $G$ isomorphic to a graph obtained from $H$ by replacing each edge of $H$ with a path between its end-vertices (all of whose vertices other than the two ends are new vertices that are not in $H$) such that these paths are pairwise edge-disjoint. The subset of vertices of $H'$ in bijection with the vertices of $H$ are called the \emph{branch vertices} of the immersion $H'$.

From Thomassen's Theorem \ref{thm boundary linked}, it follows that given any finite set of vertices $A_0$ in a connected locally finite graph $G$, then $V(G)\setminus A_0$ can be partitioned into finitely many pairwise disjoint vertex sets each of which is either a singleton or a boundary-linked set with finite boundary. Adding the singletons to $A_0$, we get a finite set $A$ such that $V(G)\setminus A$ is partitioned into boundary-linked sets with the aforementioned properties. There could be edges between two different boundary-linked sets in this decomposition. If the size of the boundary of a set is even (odd), we will for simplicity say that the set is of even (odd) boundary respectively.

\begin{theorem}
\label{immersion result}
Let $k$ be a positive integer and $G$ a $(2k+1)$-edge-connected, locally finite graph, and let $A$ be a finite set of vertices in $G$ such $V(G)\setminus A$ is partitioned into finitely many pairwise disjoint boundary-linked sets each of finite boundary. Then $G$ contains an immersion $H'$ of a finite $2k$-edge-connected graph $H$ with the following properties.

\begin{itemize}

\item[(i)] $G[A]$ is a subgraph of both $H$ and $H'$ and is its own image under immersion, that is, the path in $H'$ replacing an edge of $G[A]$ is the edge itself.

\item[(ii)] The vertices of $V(H)\setminus A$ are in bijection with the boundary-linked sets of odd boundary such that each vertex is mapped under immersion in $H'$ to a vertex $x_C$ in its corresponding boundary-linked set $C$ of odd boundary. Each vertex of $V(H)\setminus A$ is of degree $2k+1$ and the edges incident with it are mapped in $H'$ to $2k+1$ edge-disjoint paths from $x_C$ to $2k+1$ distinct edges in the boundary of $C$. The edges of these paths are contained in $C$ except for the last edge in the boundary.

\item[(iii)] For every boundary-linked set $C$, all the edges of the boundary $\delta_{G}(C)$ are contained in $H'$.

\end{itemize}

\end{theorem}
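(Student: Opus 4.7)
My plan is to follow exactly the contraction--compatible-lifting--linking-fan pipeline used in the proof of Theorem \ref{th:huckInfinite}, simply reading the resulting object off as an immersion instead of a linkage. Let $C_1,\dots,C_r$ denote the given boundary-linked sets with their witnessing ray-sets $\mathcal R_1,\dots,\mathcal R_r$, and form the finite graph $G_0$ from $G$ by contracting each $C_i$ to a single vertex $c_i$. Since $A$ is disjoint from every $C_i$, the subgraph $G[A]$ sits unaltered inside $G_0$, and it will play the role demanded by (i).

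Since $G$ is $(2k+1)$-edge-connected and in particular $2k$-edge-connected, I apply Lemma \ref{lm:compatibleLifting} with the even parameter $2k$ in place of its $k$. This produces a finite $2k$-edge-connected graph $H_r$ in which every $c_i$ with $|\delta_G(C_i)|$ even has been removed and every $c_i$ with $|\delta_G(C_i)|$ odd has degree exactly $2k+1$, together with, for every $i$, a family of pairwise edge-disjoint compatible paths inside $G[C_i]$ witnessing the lifts performed at $c_i$. Setting $H:=H_r$ already delivers a finite $2k$-edge-connected graph whose vertex set is $A$ together with one vertex for each odd-boundary $C_i$; only the identification of each surviving $c_i$ with a specific vertex $x_{C_i}\in C_i$ remains, and this will come from the linking fan step.

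To promote $H$ to an immersion $H'$ in $G$, first unfold each edge of $H$ that was produced by lifting, exactly as in the final paragraphs of the proof of Theorem \ref{th:huckInfinite}: iteratively, each lift at a given $c_i$ is replaced by its pair of boundary edges of $C_i$ joined by the corresponding compatible path in $G[C_i]$. Next, for each surviving $c_i$, let $\mathcal R'_i\subseteq\mathcal R_i$ denote the $2k+1$ rays whose first edges are the surviving boundary edges of $C_i$, and let $X_i$ be the finite union of all compatible paths that were installed inside $G[C_i]$. Because $|\mathcal R'_i|=2k+1$ equals the edge-connectivity of $G$, Proposition \ref{linking fan proposition} yields a vertex $x_{C_i}$ together with $2k+1$ pairwise edge-disjoint paths from $x_{C_i}$ to the initial edges of the rays of $\mathcal R'_i$, all edge-disjoint from $X_i$; by running the construction in that proposition with the separating subgraphs $L_n$ and the vertex $v$ chosen inside $C_i$---feasible because the rays of $\mathcal R'_i$ all lie in one end of $G[C_i]$, hence of $G$---we force $x_{C_i}\in C_i$ and keep the fan paths inside $G[C_i]\cup \delta_G(C_i)$, supplying (ii). Property (iii) then follows because each boundary edge of each $C_i$ appears either inside an unfolded lifted edge (as one of the pair of boundary edges used in a compatible lifting) or as the last edge of one of the linking fan paths.

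The main technical obstacle will be verifying global edge-disjointness among the three families of paths in $H'$: compatible paths, linking fans, and unfolded pairings of boundary edges. Disjointness across distinct $C_i$ and $C_j$ is almost automatic, since the interiors of these paths live in the disjoint vertex sets $C_i$ and $C_j$ and each boundary edge of $C_i$ is consumed in exactly one unfolded lift or as the final edge of exactly one fan path; disjointness inside a single $C_i$ is built into Lemma \ref{lm:compatibleLifting} for the compatible paths and into the choice of the forbidden set $X_i$ in Proposition \ref{linking fan proposition} for the fan paths. The only nontrivial bookkeeping point, identical to one that arises in the proof of Theorem \ref{th:huckInfinite}, is confining the output of Proposition \ref{linking fan proposition} to the correct boundary-linked set, which is handled by the choice of the $L_n$ mentioned above.
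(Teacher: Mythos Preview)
Your proposal is correct and mirrors the paper's proof step by step: contract each $C_i$ to form a finite graph, apply Lemma~\ref{lm:compatibleLifting} with the even parameter $2k$ to obtain the $2k$-edge-connected graph $H$, unfold each lifted edge recursively via the compatible paths, and then invoke Proposition~\ref{linking fan proposition} at every surviving odd-degree $c_i$ to supply the $(2k+1)$-path fan. The one point you single out---confining the fan to $C_i$---is the only subtlety, and the cleanest way to handle it (which the paper leaves equally implicit) is to apply Proposition~\ref{linking fan proposition} to the graph obtained from $G$ by contracting $V(G)\setminus C_i$ to a single vertex, which is still $(2k+1)$-edge-connected and locally finite, rather than trying to steer the $L_n$ inside $C_i$ while working in $G$.
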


\begin{proof}
 Let $C_1,\dots,C_r$ be the given boundary-linked decomposition of $V(G)\setminus A$, and let $\mathcal{R}_i$ for $i=1,\dots,r$ be a witnessing set of rays. Let $G^*$ be the graph obtained from $G$ by contracting every $C_i$ to a vertex $c_i$ for $i=1,\dots,r$.
 
 By Lemma \ref{lm:compatibleLifting} applied to the even connectivity $2k$, there exists a sequence of lifts, consisting of lifts at $c_1, c_2, \dots,c_r$ in order, such that for every $i\in \{1,\dots,r\}$, the part of the sequence performed at $c_i$ is $(\mathcal{R}_i, 2k)$-compatible in the graph obtained from $G^*$ after doing the lifts at $c_j$ for all $j<i$ in order. The finite graph $H$ that is the result of performing all these lifts is $2k$-edge-connected, and only the vertices $c_i$ of odd degree in $G^*$ are contained in $H$ where they have degree $2k+1$.

Since $G$ is $(2k+1)$-edge-connected, then by Proposition \ref{linking fan proposition}, in every $C_i$ of odd boundary there is a vertex $x_i$ and $2k+1$ pairwise edge-disjoint paths contained in $G[C_i]$ (except for their last edge), each from $x_i$ to a distinct one of the edges of $\delta(C_i)$ corresponding to the $2k+1$ edges incident with $c_i$ in $H$ (each of which is the first edge of a ray from the witnessing set). Moreover, these paths are edge-disjoint from the, in total finite, subgraph consisting of all the linking paths in $G[C_i]$ (obtained by successive $(\mathcal{R}_i, 2k)$-compatible lifting) which connect the lifted pairs of edges from $\delta(C_i)$. Note that an edge incident with $c_i$ in $H$ is not necessarily an edge in $G$, however, it corresponds to an edge in $\delta(C_i)$ (possibly with a different end-vertex outside $C_i$). 

Let $X:=\{x_i: |\delta(C_i)| \; \text{is odd}\}$. Then $G$ contains an immersion of the $2k$-edge-connected graph $H$ whose set of branch vertices is $A\cup X$. The immersion $H'$ is constructed from $H$ by applying the following two stages of replacing edges by paths:

\begin{itemize}

\item[(1)] First, any edge $e=uv$ in $H$ that resulted from (possibly several) lifts is recursively replaced by paths until we obtain a path in $G^*$ between its end-vertices. We start with $e$ as a path of length $1$ between $u$ and $v$, then at any step of the recursion we replace every edge of the current path connecting $u$ and $v$ that resulted from lifting with the two edges it was directly obtained from.

This gives us a longer path at each step, and once the iteration is complete, we get a path $P$ in $G^*$ between $u$ and $v$. Now, any two consecutive edges in $P$ meet at one of the vertices $c_i$ in $G^*$. We insert in $P$ the linking path between them in $G[C_i]$ which was obtained by $(\mathcal{R}_i, 2k)$-compatible lifting according to Lemma \ref{lm:compatibleLifting}. In this way we obtain from $P$ a path in $G$.

Note that an edge obtained by lifting, such as $e$ above, may have both end-vertices in $A$ or may be incident with one of the vertices $c_i$, and in both cases the treatment in the same.

\item[(2)] The vertices $x_i$ are the branch vertices in bijection with the vertices $c_i$. After applying (1), the $2k+1$ edges incident with $c_i$ are now replaced by the corresponding original edges from $\delta(C_i)$ incident with $c_i$ in $G^*$. We then replace these with the $2k+1$ edge-disjoint paths emanating from $x_i$ which we found above, and which end in these edges.

\end{itemize}

It is clear from the construction that $(ii)$ is satisfied. It can also be easily seen from the construction that any edge of $G$ with both end-vertices in $A$ is contained in both $H$ and in its immersion $H'$ in $G$ as its own image since such an edge is not affected at all by lifting because it is not incident with any of the vertices $c_i$. This shows that $(i)$ holds.

For every boundary-linked set $C_i$, any edge in $\delta(C_i)$ is either lifted with another edge, or is one of the $2k+1$  edges remaining in case $|\delta(C_i)|$ is odd. In the first case, it follows from (1) that this edge is contained in one of the paths of $H'$ obtained by the recursive replacements. In the second case, it follows from (2) that this edge is contained in one of the $2k+1$ paths connecting $x_i$ to the boundary of $C_i$, and so is in $H'$ as well. This proves $(iii)$.\end{proof}

The above discussion, about boundary-linked sets, before the theorem directly gives us the following corollary.

\begin{corollary}\label{immersion corollary}
Let $k$ be a positive integer and $G$ a $(2k+1)$-edge-connected, locally finite graph, and let $A_0$ be a finite set of vertices in $G$. Then $G$ contains an immersion $H'$ of a finite $2k$-edge-connected graph $H$ such that $G[A_0]$ is a subgraph of both $H$ and $H'$ and is its own image under immersion. In particular, $A_0$ is a subset of the branch vertices of $H'$.
\end{corollary}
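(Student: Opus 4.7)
The plan is to derive this corollary directly from Theorem \ref{immersion result} together with Thomassen's boundary-linked decomposition (Theorem \ref{thm boundary linked}); essentially all the heavy lifting has already been done, and what remains is to package the data correctly so that the hypothesis of Theorem \ref{immersion result} is met while keeping $A_0$ contained in the branch vertex set.

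First I would observe that since $G$ is locally finite, the boundary $\delta(A_0)$ is finite. I then apply Theorem \ref{thm boundary linked} to the set $A_0$: the complement $V(G) \setminus A_0$ decomposes into finitely many pairwise disjoint vertex sets, each of which is either a singleton or a boundary-linked set of finite boundary. Following the discussion that precedes Theorem \ref{immersion result}, I absorb all the singletons of this decomposition into $A_0$ to obtain a finite set $A \supseteq A_0$ such that $V(G) \setminus A$ is partitioned into finitely many pairwise disjoint boundary-linked sets $C_1,\dots,C_r$, each with finite boundary. This is precisely the set-up required by Theorem \ref{immersion result}.

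Next I would apply Theorem \ref{immersion result} with this $A$ to obtain a finite $2k$-edge-connected graph $H$ and an immersion $H'$ of $H$ in $G$ with the three listed properties. By property $(i)$ applied to $A$, the induced subgraph $G[A]$ is contained in both $H$ and $H'$ and is its own image under the immersion (each edge of $G[A]$ is represented by itself, a path of length one). Since $A_0 \subseteq A$, the subgraph $G[A_0]$ is an induced subgraph of $G[A]$, and hence inherits the same property: $G[A_0] \subseteq H$, $G[A_0] \subseteq H'$, and each of its edges is its own image under the immersion. Finally, by property $(ii)$, the branch vertices of $H'$ are exactly $A$ together with a chosen vertex $x_{C_i}$ inside each boundary-linked set $C_i$ of odd boundary, so in particular $A_0 \subseteq A$ is contained in the set of branch vertices.

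There is no real obstacle to this argument, since all the structural and combinatorial work is already packaged into Theorem \ref{thm boundary linked} and Theorem \ref{immersion result}. The only point requiring a small sanity check is that enlarging $A_0$ by the singleton classes to form $A$ does not disturb the conclusion one wants for $A_0$; this is immediate because properties $(i)$--$(iii)$ of Theorem \ref{immersion result} are monotone with respect to shrinking the distinguished set (in the sense that the conclusions stated for $A$ automatically restrict to conclusions for any subset of $A$, in particular for $A_0$).
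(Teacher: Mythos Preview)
Your proposal is correct and follows exactly the approach the paper indicates: apply Theorem~\ref{thm boundary linked} to $A_0$, absorb the singletons to form $A$, invoke Theorem~\ref{immersion result} for this $A$, and then restrict the conclusions from $A$ back to $A_0$. The paper's own proof of the corollary is simply the one-line remark that ``the above discussion, about boundary-linked sets, before the theorem directly gives us the following corollary,'' which is precisely what you have spelled out.
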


Now we prove the new orientation result. We thank Max Pitz for many helpful conversations about the following proof, and for pointing out that the above immersion result can be used to prove the sufficiency of $(2k+1)$-edge-connectivity for $k$-arc-connected orientations, which is a better result than what we originally thought we can get. Recall that an orientation of a graph $G$ is \emph{well-balanced} if for any two vertices $x$ and $y$ in $G$ the maximum number of edge-disjoint directed paths from $x$ to $y$ in the orientation is at least half the maximum number of edge-disjoint paths between them in $G$, rounded down.

\begin{theorem}\label{orientation theorem}
Let $k$ be a positive integer and $G$ a $(2k+1)$-edge-connected infinite graph. Then $G$ has a $k$-arc-connected orientation.
\end{theorem}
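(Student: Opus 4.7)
The plan is to reduce to the locally finite case and then combine the sharp immersion provided by Corollary \ref{immersion corollary} with Nash-Williams' theorem for finite graphs via a compactness argument. First I would perform the standard reduction to locally finite graphs, following Thomassen's machinery in \cite{thomassen2016orientations}: the uncountable case reduces to the countable one via a subgraph-plus-compactness argument in the orientation space $\{0,1\}^{E(G)}$ (using that the existence of $k$ arc-disjoint directed paths between a fixed pair of vertices depends on only finitely many arcs), and the countable case reduces to the locally finite one via Lemma \ref{countable to locally finite} combined with a block-by-block treatment (the block graph of a connected graph is a tree, so $k$-arc-connected orientations of the locally finite blocks assemble into one of the whole graph). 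So I may assume henceforth that $G$ is locally finite and $(2k+1)$-edge-connected.

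Next I would fix an exhaustion $A_1 \subseteq A_2 \subseteq \cdots$ of $V(G)$ by nested finite sets with $\bigcup_n A_n = V(G)$. For each $n$, applying Corollary \ref{immersion corollary} with $A_0 := A_n$ produces a finite $2k$-edge-connected graph $H_n$ and an immersion $H'_n \subseteq G$ of $H_n$ whose branch-vertex set contains $A_n$ and in which $G[A_n]$ is fixed by the immersion. By Nash-Williams' theorem \cite{nash1960orientations}, $H_n$ admits a $k$-arc-connected orientation $\vec{H}_n$. Lifting this to the immersion---by orienting each path $P_e \subseteq H'_n$ representing an edge $e$ of $H_n$ in the direction of $\vec{e}$---and extending arbitrarily to the remaining edges of $G$ yields a global orientation $\sigma_n$ of $G$ with the property that every pair $x, y \in A_n$ is linked by $k$ arc-disjoint directed walks in $\sigma_n$, obtained from the $k$ arc-disjoint $x \to y$ paths in $\vec{H}_n$ via the immersion.

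Finally I would extract a pointwise-convergent subsequence $\sigma_{n_j} \to \sigma$ in the compact metrisable space $\{0,1\}^{E(G)}$. The main obstacle---the crux of the argument---is to verify that the limit $\sigma$ is itself $k$-arc-connected. By Menger's theorem for locally finite digraphs, it suffices to show that no finite arc-cut $F$ of size less than $k$ separates any pair $x, y$ in $\sigma$. Suppose for contradiction such an $F$ exists, and let $S$ be the set of vertices reachable from $x$ in $\sigma - F$, so $\delta^+_\sigma(S) \subseteq F$. For large $j$ (with $x, y \in A_{n_j}$), the $k$ arc-disjoint $x \to y$ walks in $\sigma_{n_j}$ must each cross from $S$ to $V \setminus S$, forcing $|\delta^+_{\sigma_{n_j}}(S)| \geq k$. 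When $\delta_G(S)$ is finite, pointwise convergence immediately yields $|\delta^+_\sigma(S)| \geq k$, contradicting $\delta^+_\sigma(S) \subseteq F$ with $|F| < k$. The hard case is when $\delta_G(S)$ is infinite: pointwise convergence alone is insufficient because the witnessing outward arcs in $\sigma_{n_j}$ may drift to infinity along $\delta_G(S)$. To handle this, I would leverage property (iii) of Theorem \ref{immersion result}---which guarantees that every boundary edge of every boundary-linked set is contained in the immersion $H'_n$, and hence oriented by $\sigma_n$ according to $\vec{H}_n$---together with a careful choice of the exhaustion in which the boundary-linked decompositions refine along the sequence, in order to force any hypothetical counterexample $S$ to have finite $G$-boundary, thereby closing the loop and completing the proof.
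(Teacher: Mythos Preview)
Your reduction to the locally finite case and your invocation of Corollary~\ref{immersion corollary} are in line with the paper, but from that point on the two arguments diverge, and your compactness route has a genuine gap at precisely the place you yourself flag as ``the hard case''.

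The paper does \emph{not} argue by compactness. It constructs an increasing chain of finite oriented subgraphs $\vec{W}_0\subseteq\vec{W}_1\subseteq\cdots$ of $G$, and the desired orientation is simply $\bigcup_n\vec{W}_n$; no subsequence extraction and no limit argument about cuts is ever needed. The engine driving the inductive step---and the ingredient entirely absent from your proposal---is the extension theorem of Kir\'aly and Szigeti \cite[Corollary~2]{kiraly2006simultaneous}: any consistently oriented Eulerian subgraph of a finite graph extends to a well-balanced orientation of the whole graph. Contracting the branch vertices of $W_n$ to a single vertex $w^*$ turns the already-oriented part into a consistently oriented Eulerian subgraph of the contracted $H_{n+1}$ (each immersion path becomes a directed cycle through $w^*$), so Kir\'aly--Szigeti produces a $k$-arc-connected extension, which uncontracts to $\vec{W}_{n+1}\supseteq\vec{W}_n$. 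Compatibility across stages is thus built in from the start.

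Your orientations $\sigma_n$, by contrast, are chosen independently at each stage, so the pointwise limit $\sigma$ carries no a priori structure. You locate the obstruction correctly: for the reachability set $S$ in $\sigma-F$ one only controls $\delta^+_\sigma(S)\subseteq F$, while $\delta_G(S)$ may be infinite, and then the outward arcs witnessing $|\delta^+_{\sigma_{n_j}}(S)|\ge k$ can drift along $\delta_G(S)$ and need not survive in the limit. But your proposed fix---refine the boundary-linked decompositions and invoke property~(iii) of Theorem~\ref{immersion result} to force $\delta_G(S)$ finite---is a hope, not an argument. The set $S$ is determined by the \emph{limit} orientation $\sigma$ and the cut $F$; it has no reason to respect any boundary-linked decomposition fixed in advance, and property~(iii) only tells you which edges lie in $H'_n$, not how an arbitrary reachability set in $\sigma$ must sit relative to the $C_i$. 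You would need an explicit mechanism that replaces $S$ by a set with finite $G$-boundary still separating $x$ from $y$ and still having fewer than $k$ outward $\sigma$-arcs, and nothing you have written supplies one. This is exactly the obstruction that makes the infinite orientation problem resistant to naive compactness attacks, and it is why the paper takes the constructive Kir\'aly--Szigeti route instead.
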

\begin{proof}
By the arguments in Sections 7 and 8 of Thomassen's paper \cite{thomassen2016orientations}, it suffices to prove this for locally finite graphs. So, let $v_0, v_1, \dots$ be an enumeration of the vertices in $G$. We define a sequence of finite directed graphs $\vec{W}_0 \subseteq\vec{W}_1 \subseteq \vec{W}_2 \subseteq \dots$ whose underlying graphs $W_1 \subseteq W_2 \subseteq \dots$ are subgraphs of $G$ with the following properties:

\begin{itemize}

\item[(1)] for every $n\geq 0$, $v_n \in V(W_n)$,

\item[(2)] for every $n\geq 1$,

\begin{itemize}

\item[(i)] each $W_n$ is an immersion in $G$ of a $2k$-edge-connected finite graph $H_n$ such that both $W_n$ and $H_n$ contain $W_{n-1}$ which is its own image under immersion,

\item[(ii)] each path of the immersion $W_n$ replacing an edge of $H_n$ between two branch vertices has all of its edges directed in one direction in $\vec{W}_n$  (from one branch vertex toward the other), and

\item[(iii)]for any two branch vertices $y$ and $z$ of $W_n$ there are $k$ arc-disjoint paths directed from $y$ to $z$ in $\vec{W}_n$

\end{itemize}

\end{itemize}

Note that $W_0$ is not required to satisfy (2), $W_n$ is not necessarily $2k$-edge-connected, and $\vec{W}_n$ is not necessarily $k$-arc-connected, but these are the local connectivities between their branch vertices. Let $W_0$ be the graph consisting of $v_0$ only.

Assume that $\vec{W}_n$ is defined, and that either it is a single vertex with no edges (the case of $W_0$), or it satisfies the properties (1) and (2). Let $v$ be the first vertex in the above enumeration that is not in $W_n$, so $v=v_i$ for some $i\geq n+1$. Let $A$ be the finite set consisting of $v$, $V(W_n)$, and the finitely many singletons in a boundary-linked decomposition of $V(G)\setminus (V(W_n)\cup \{v\})$ according to Theorem \ref{thm boundary linked}. Then by Corollary \ref{immersion corollary}, $G$ contains an immersion $H'$ of a finite $2k$-edge-connected graph $H$ such that $G[A]$ is contained in both $H$ and $H'$ as its own image under immersion. In particular $A$ is a subset of the branch vertcies of $H'$.

By $(2)$, $W_n$ is an immersion. Let $W^*$ be the graph obtained from $W_n$ by identifying its branch vertices into one vertex $w^*$. Then $W^*$ is an Eulerian graph because it consists of edge-disjoint cycles going through $w^*$, or it consists of only one vertex without edges in case $n=0$. These cycles come from the paths (replacing edges) of the immersion whose end-vertices are now identified. The orientation $\vec{W}^*$ of the Eulerian graph $W^*$ naturally inherited from $\vec{W}_n$ by this identification is consistent (i.e. along an Eulerian tour) because each path of the immersion $W_n$ (which is now a cycle through $w^*$) is directed in one direction by assumption (ii) of the induction hypothesis. In case $n=0$, it is vacuously true that the orientation is consistent as there are no edges to orient.  

Let $H^*$ be the graph obtained from $H$ as a result of applying the aforementioned identification in it (recall that $W_n\subseteq G[A] \subseteq H$).

The oriented part of $H^*$, that is $\vec{W^*}$, is a consistently oriented Eulerian subgraph, so by \cite[Corollary 2]{kiraly2006simultaneous}, the orientation $\vec{W^*}$ is extendable to a well-balanced orientation $\vec{H}^*$ of $H^*$. Since $H$ is $2k$-edge-connected, then $H^*$ too is, and so $\vec{H}^*$ is $k$-arc-connected. The orientations $\vec{W}_n$ and $\vec{H}^*$ together give an orientation $\vec{H}$ of $H$. In $H'$ give each path between two branch vertices the same direction as the edge of $H$ it corresponds to. This defines an orientation $\vec{H'}$. We then define $\vec{W}_{n+1}$ to be the oriented graph $\vec{H'}$.

Then $\vec{W}_n \subseteq \vec{W}_{n+1}$, and it is obvious from the construction that $v_{n+1}\in V(W_{n+1})$, so (1) holds. Since $H'$ (=$W_{n+1}$), which is the underlying graph of $\vec{W}_{n+1}$, is an immersion of a $2k$-edge-connected graph $H$ and both contain $W_n$, then (i) is satisfied. The way the orientation of $H'$ was defined from the orientation of $H$ guarantees (ii). It remains to show that (iii) too holds.

We first show that the orientation $\vec{H}$ is $k$-arc-connected as $\vec{H^*}$ is. Once this is proved, it follows directly that for any two branch vertices $y$ and $z$ of $H' (=W_{n+1})$, there are $k$ arc-disjoint paths directed from $y$ to $z$ in $\vec{W}_{n+1}$ (that is (iii) is satisfied) obtained from the $k$ arc-disjoint directed paths between their pre-images in $H$ by replacing edges with paths.

 By Menger's Theorem, we need to show that for any edge-cut in $H$ with sides $Y$ and $Z$, there are at least $k$ edges directed from $Y$ to $Z$, and at least $k$ edges directed from $Z$ to $Y$ in $\vec{H}$. Recall that $\vec{H}$ is the union of $\vec{W}_n$ and $\vec{H}^*$. We consider two cases, whether or not this cut separates two branch vertices of $\vec{W}_n$. If it does, then the cut contains the desired $k$ arcs in each direction by (iii) for $\vec{W}_n$. So suppose all the branch vertices of $W_n$ lie on one side of the cut, say $Y$, then by identifying all these branch vertices to $w^*$, the edges of this cut still form a cut, but in $H^*$ with $w^*\in Y$. Since $\vec{H}^*$ is $k$-arc-connected, it contains $k$ arcs from $Y$ to $Z$ and $k$ arcs from $Z$ to $Y$, and these are also arcs in $\vec{H}$ between $Y$ and $Z$, except that some of them may have as one end-vertex $w^*$ in $H^*$ but have in place of it a branch vertex of $W_n$ in $H$.

 The union of the oriented graphs $W_n$ defines an orientation of $G$: $\vec{G}= \bigcup_{n\geq 0} W_n$. This orientation is $k$-arc-connected as follows. By construction, for any two vertices
$y$ and $z$ of $G$, there exist $n\geq 1$ such that $y$ and $z$ are in $W_n$. The set $V(W_n)$ is contained in $W_{n+1}$ as a subset of the branch vertices, so by (iii) for $\vec{W}_{n+1}$ there are $k$ arc-disjoint paths directed from $y$ to $z$ in $\vec{W}_{n+1}$, and these are also paths in $\vec{G}$.\end{proof}

\bibliographystyle{plain}
\bibliography{reference}

@article{BensonJim,
  title={A generalization of the grid theorem},
  author={Geelen,  Jim  and Joeris, Benson},
  journal={arXiv preprint, arXiv:1609.09098},
  year={2016}
}

@book{Diestel,
	author = "Reinhard~Diestel",
	edition = "5th",
	publisher = "Springer",
	title = {{Graph Theory}},
	year = "2017"
}

@article{okamura, 
title={Every $4k$-edge-connected graph is weakly $3k$-linked},
author={Okamura, H.},
journal={Graphs Combin.},
volume={6},
pages={179--185},
year={1990},
publisher={Springer}
}

@article{huck1991,
  title={A sufficient condition for graphs to be weakly $k$-linked},
  author={Huck, A.},
  journal={Graphs Comb.},
  volume={7},
  pages={323--351},
  year={1991},
  publisher={Springer}
}

@article{nash1960orientations,
  title={On orientations, connectivity and odd-vertex-pairings in finite graphs},
  author={Nash-Williams,  C.{St}.J.A.},
  journal={Canadian Journal of Mathematics},
  volume={12},
  pages={555--567},
  year={1960},
  publisher={Cambridge University Press}
}

@article{frank,
  title={On a theorem of {M}ader},
  author={Frank, A.},
  journal={Discrete Mathematics},
  volume={101},
  pages={49--57},
  year={1992}
}

@article{Assem2022lifting,
  title={Analysis of the Lifting Graph},
  author={Assem, Amena},
  journal={arXiv preprint, arXiv:2212.03347},
  year={2022}
}

@article{assem2023towards,
  title={Towards {N}ash-{W}illiams Orientation Conjecture for Infinite Graphs},
  author={Assem, Amena},
  journal={Journal of Graph Theory},
  number={3},
  volume={108},
  pages={608--619},
  year={March 2025}
}

@article{Max2023orientation,
  title={The {N}ash-{W}illiams orientation theorem for graphs with countably many ends},
  author={Assem, Amena and Koloschin, Marcel and Pitz, Max},
  journal={European Journal of Combinatorics},
   volume={124},
  year={February 2025, 104043}
}

@article{Jordan,
author = {Jord\'{a}n, Tibor},
title = {Constrained Edge-Splitting Problems},
journal = {SIAM Journal on Discrete Mathematics},
volume = {17},
number = {1},
pages = {88-102},
year = {2003}
}

@article{lovasz1976eulerian,
  title={On some connectivity properties of Eulerian graphs},
  author={Lov{\'a}sz, L{\'a}szl{\'o}},
  journal={Acta Mathematica Hungarica},
  volume={28},
  number={1-2},
  pages={129--138},
  year={1976},
  publisher={Akad{\'e}miai Kiad{\'o}, co-published with Springer Science+ Business Media BV~…}
}

@incollection{mader1978reduction,
  title={A reduction method for edge-connectivity in graphs},
  author={Mader, Wolfgang},
  booktitle={Annals of {D}iscrete {M}athematics},
  volume={3},
  pages={145--164},
  year={1978},
  publisher={Elsevier}
}

@article{thomassen1980linkages,
  title={2-linked graphs},
  author={Thomassen, Carsten},
  journal={Europ.\ J.\ Combin.},
  volume={1},
  issue={4},
  pages={371--378},
  year={1980},
  publisher={Elsevier}
}

@article{thomassen2016orientations,
  title={Orientations of infinite graphs with prescribed edge-connectivity},
  author={Thomassen, Carsten},
  journal={Combinatorica},
  volume={36},
  number={5},
  pages={601--621},
  year={2016},
  publisher={Springer}
}

@article{ORT2016linkages,
  title={Liftings in finite graphs and linkages in infinite graphs with prescribed edge-connectivity},
  author={Ok, Seongmin and Richter, R.Bruce and Thomassen, Carsten},
  journal={Graphs and Combinatorics},
  volume={32},
  number={6},
  pages={2575--2589},
  year={2016},
  publisher={Springer}
}

@article{kiraly2006simultaneous,
  title={Simultaneous well-balanced orientations of graphs},
  author={Kir{\'a}ly, Zolt{\'a}n and Szigeti, Zolt{\'a}n},
  journal={Journal of Combinatorial Theory, Series B},
  volume={96},
  number={5},
  pages={684--692},
  year={2006},
  publisher={Elsevier}
}

@article{AmenaThesis,
  title={Edge-disjoint Linkages in Infinite Graphs},
  author={Assem, Amena},
  year={2022},
  journal={PhD thesis, University of Waterloo}
}

\end{document}